\documentclass[a4paper]{amsart}
\usepackage{amsfonts, amssymb, amsmath, amsthm, mathrsfs, amscd, enumerate, caption, subcaption}

\usepackage{amssymb}
\usepackage{amsmath}
\usepackage{amsthm}
\usepackage{bbm}
\usepackage{doi}
\usepackage{graphicx}
\usepackage{bbm}

\hypersetup{
	colorlinks=true,       
	linkcolor=magenta,          
	citecolor=blue,        
	filecolor=violet,      
	urlcolor=orange          
}

\usepackage{bookmark}
\usepackage{hyperref}
\hypersetup{pdfstartview={FitH}}

\usepackage[]{xcolor}

\usepackage{tikz}

\numberwithin{figure}{section}
\usetikzlibrary{positioning,intersections, calc, arrows,decorations.markings,angles}

\numberwithin{equation}{section}

\colorlet{Green}{green!70!black!}
\definecolor{banana}{HTML}{f6bb42}
\definecolor{melon}{HTML}{87a96f}


\theoremstyle{plain}
\newtheorem{theorem}{Theorem}[section]
\newtheorem{lemma}[theorem]{Lemma}
\newtheorem*{lemma*}{Lemma}
\newtheorem{proposition}[theorem]{Proposition}
\newtheorem*{proposition*}{Proposition}
\newtheorem{corollary}[theorem]{Corollary}
\theoremstyle{definition}

\newtheorem*{claim*}{Claim}

\theoremstyle{remark}
\newtheorem*{remark}{Remark}


\def\d{\mathrm{d}}
\def\ae{{\rm a.e.}}

\newcommand{\R}{{\mathbb R}}

\def\C{\mathcal{C}}



\author[Ko]{Hyerim Ko}
\address[Hyerim Ko]{Department of Mathematics, and Institute of Pure and Applied Mathematics, Jeonbuk National University, Jeonju, 54896, Republic of Korea}
\email{kohr@jbnu.ac.kr}

\author[Lee]{Sanghyuk Lee}
\address[Sanghyuk Lee]{Department of Mathematical Sciences and RIM, Seoul National University, Seoul 08826, Republic of Korea}
\email{shklee@snu.ac.kr}

\author[Shiraki]{Shobu Shiraki}
\address[Shobu Shiraki]{Department of Mathematics, Faculty of Science, University of Zagreb, Bijeni\v{c}ka cesta 30, 100000 Zagreb, Croatia}
\email{shobu.shiraki@math.hr}
\begin{document}
\date{\today}
\title[
]
{
Maximal estimates for orthonormal systems of wave equations with sharp regularity
}
\keywords{Wave equation, maximal estimates, orthonormal system.}
\subjclass[20100]{35L05 (primary), 35B65 (secondary)}
\begin{abstract}
We study maximal estimates for the wave equation with orthonormal initial data. In dimension $d=3$, we establish optimal results  with the sharp regularity exponent  up to the endpoint. In higher dimensions $d \ge 4$ and also in $d=2$, we obtain sharp bounds for the Schatten exponent (summability index) $\beta\in [2, \infty]$ when $d\ge4$, and $\beta\in[1, 2]$ when $d=2$, improving upon the previous estimates due to Kinoshita–Ko–Shiraki. Our approach is based on a novel analysis of a key integral arising in the case $\beta=2$, which allows us to refine existing techniques and achieve the optimal estimates.
\end{abstract}

\maketitle


\section{Introduction}

\subsection{Background}

Let $d$ be the dimension, $a>0$ and $s\in \mathbb R$. We consider the equation
\begin{align}\label{e:fractional Schrodinger}
i \partial_t u =-(- \Delta)^\frac a2 u, \qquad
    u(x,0)=f(x),
\end{align}
with the initial datum $f$ in the inhomogeneous Sobolev space $H^s(\R^d)$ of order $s$, equipped with the norm $\|f\|_{H^s(\R^d)}=\| (1-\Delta)^{s/2}f\|_{L^2(\R^d)}$. Notably important cases are $a=2$ and $a=1$ corresponding to the (standard) Schr\"odinger equation and the half-wave equation, respectively. 
The solution $u$ to the equation \eqref{e:fractional Schrodinger}  can be  formally expressed as
\[
u(x, t)=e^{it(-\Delta)^\frac a2}f(x)
:=
(2\pi)^{-d}\int_{\R^d}e^{i(x\cdot \xi+t|\xi|^a)} \widehat f(\xi)\,\d\xi.
\]

A fundamental question is to determine the minimal value of $s$ for which the pointwise convergence 
\begin{equation}\label{e:pw frac}
    \lim_{t\to0} e^{it(-\Delta)^\frac a2}f(x)
    =
    f(x),\quad \ae
\end{equation}
holds for all $f\in H^s(\R^d)$. The problem in the case $a=2$, in particular, is often referred to as Carleson's problem. Although the one-dimensional case was solved in the 1980s (\cite{Carleson,DK82}),  shortly after the problem was posed, the higher-dimensional case was only recently settled, except the critical case. It is now known that the pointwise convergence \eqref{e:pw frac} holds for $s>\frac{d}{2(d+1)}$ \cite{DGL17, DZ19}, and the threshold is sharp \cite{Bourgain16}. However, whether \eqref{e:pw frac} continues to be  valid for the critical exponent $s=\frac{d}{2(d+1)}$ in higher dimensions remains open. 

In this manuscript, we are interested in the case of wave, where $a=1$. This case turned out to be easier than the Schr\"odinger case, and in fact, the following has been established. 

\begin{theorem}[\cite{Cowling, Walther}]
    Let $d\geq 2$, $a=1$, and $s\in \mathbb R$. Then, the pointwise convergence \eqref{e:pw frac} holds for all $f\in H^s(\R^d)$ if and only if $s>\frac12$.
\end{theorem}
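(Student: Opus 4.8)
\emph{Sufficiency.} The plan is the classical reduction: since Schwartz functions are dense in $H^s(\R^d)$ and $e^{it\sqrt{-\Delta}}g\to g$ uniformly for Schwartz $g$, writing $f=g+(f-g)$ and taking $\limsup_{t\to0}$ shows that \eqref{e:pw frac} for all $f\in H^s$ follows once one proves the maximal estimate
\begin{equation}\label{e:max-target}
\Big\|\sup_{0<t<1}\big|e^{it\sqrt{-\Delta}}f\big|\Big\|_{L^2(\R^d)}\lesssim\|f\|_{H^s(\R^d)},\qquad s>\tfrac12.
\end{equation}
I would decompose $f=\sum_{k\ge0}f_k$ by Littlewood--Paley projections, with $\widehat{f_0}$ supported in $\{|\xi|\lesssim1\}$ and $\widehat{f_k}$ in $\{|\xi|\sim2^k\}$ for $k\ge1$, and reduce \eqref{e:max-target} to the single-frequency bound
\begin{equation}\label{e:freq-loc}
\Big\|\sup_{0<t<1}\big|e^{it\sqrt{-\Delta}}f_k\big|\Big\|_{L^2(\R^d)}\lesssim 2^{k/2}\,\|f_k\|_{L^2(\R^d)},\qquad k\ge1,
\end{equation}
with the obvious modification $\lesssim\|f_0\|_{L^2}$ when $k=0$.

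The heart of the argument is \eqref{e:freq-loc}, and for the wave equation this is genuinely short. Fixing $x$ and setting $F(t)=e^{it\sqrt{-\Delta}}f_k(x)$ (smooth in $t$ since $\widehat{f_k}$ is compactly supported), the fundamental theorem of calculus gives $\sup_{0<t<1}|F(t)|^2\lesssim\|F\|_{L^2[0,1]}^2+\|F\|_{L^2[0,1]}\|F'\|_{L^2[0,1]}$. Integrating in $x$, using Fubini and the Cauchy--Schwarz inequality in $x$ on the cross term, one gets
\[
\Big\|\sup_{0<t<1}|e^{it\sqrt{-\Delta}}f_k|\Big\|_{L^2}^2\lesssim\int_0^1\|e^{it\sqrt{-\Delta}}f_k\|_{L^2}^2\,\d t+\Big(\int_0^1\|e^{it\sqrt{-\Delta}}f_k\|_{L^2}^2\,\d t\Big)^{1/2}\Big(\int_0^1\|\partial_t e^{it\sqrt{-\Delta}}f_k\|_{L^2}^2\,\d t\Big)^{1/2}.
\]
Since $e^{it\sqrt{-\Delta}}$ is unitary on $L^2$ and $\partial_t e^{it\sqrt{-\Delta}}f_k=i\sqrt{-\Delta}\,e^{it\sqrt{-\Delta}}f_k$ with $\|\sqrt{-\Delta}\,f_k\|_{L^2}\sim2^k\|f_k\|_{L^2}$, the right-hand side is $\lesssim\|f_k\|_{L^2}^2+2^k\|f_k\|_{L^2}^2\lesssim2^k\|f_k\|_{L^2}^2$, which is \eqref{e:freq-loc}. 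It is worth stressing where the frequency localization enters: applying the same inequality directly to $f$ would only yield $\|f\|_{L^2}^{1/2}\|f\|_{\dot H^1}^{1/2}$, essentially $s=1$; localizing first replaces this ``wrong-way'' interpolation by $2^{k/2}\|f_k\|_{L^2}\sim\|f_k\|_{\dot H^{1/2}}$, the sharp exponent for each block.

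To sum, the triangle inequality for the supremum, Minkowski's inequality, and \eqref{e:freq-loc} give
\[
\Big\|\sup_{0<t<1}|e^{it\sqrt{-\Delta}}f|\Big\|_{L^2}\le\sum_{k\ge0}\Big\|\sup_{0<t<1}|e^{it\sqrt{-\Delta}}f_k|\Big\|_{L^2}\lesssim\|f_0\|_{L^2}+\sum_{k\ge1}2^{k(1/2-s)}\big(2^{ks}\|f_k\|_{L^2}\big),
\]
and Cauchy--Schwarz in $k$ together with $\sum_k4^{ks}\|f_k\|_{L^2}^2\lesssim\|f\|_{H^s}^2$ yields \eqref{e:max-target} precisely when $\sum_{k\ge1}4^{k(1/2-s)}<\infty$, i.e. $s>\tfrac12$. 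Finally, \eqref{e:max-target} combined with the density of Schwartz functions gives $\limsup_{t\to0}|e^{it\sqrt{-\Delta}}f-f|=0$ a.e. as explained above. Note that \eqref{e:freq-loc} carries the sharp power $2^{k/2}$ with \emph{no} $\e$-loss; this is exactly what makes the summation produce the open endpoint condition $s>\tfrac12$ rather than $s>\tfrac12+\e$.

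\emph{Necessity and the main obstacle.} For $s<\tfrac12$, by Stein's maximal principle (applicable since $H^s$ carries the transitive translation group) a.e. convergence for all $f\in H^s$ forces a weak-type version of \eqref{e:max-target}, so it suffices to contradict \eqref{e:max-target}. I would test it on $f=f_\lambda$ with $\widehat{f_\lambda}=\one_{\{\lambda\le|\xi|\le2\lambda\}}$, so that $\|f_\lambda\|_{H^s}\sim\lambda^{s+d/2}$; for $|x|\le c$ and the choice $t=|x|$ the phase $x\cdot\xi+t|\xi|$ is $O(1)$ on the portion of the support where $|\xi/|\xi|+x/|x||\lesssim\lambda^{-1/2}$, which has measure $\sim\lambda^d\lambda^{-(d-1)/2}$, and a stationary-phase analysis then yields $\sup_{0<t<1}|e^{it\sqrt{-\Delta}}f_\lambda(x)|\sim\lambda^{(d+1)/2}$ on a fixed-measure set; hence the left side of \eqref{e:max-target} is $\gtrsim\lambda^{(d+1)/2}$, forcing $s\ge\tfrac12$. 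I do not expect the sufficiency to present any real difficulty --- this is precisely the sense in which the wave case is easier than Carleson's problem, and the Sobolev-in-time argument is essentially forced. The genuinely delicate point is the endpoint $s=\tfrac12$ on the necessity side: the single-frequency example above is exactly critical (ratio $\sim1$), so excluding $s=\tfrac12$ requires a logarithmically refined construction --- superposing suitably arranged frequency blocks across many scales so as to defeat the phase cancellation between scales --- producing an $f\in H^{1/2}$ for which $e^{it\sqrt{-\Delta}}f$ fails to converge on a set of positive measure. This is classical, and I would cite \cite{Cowling, Walther} rather than reproduce it.
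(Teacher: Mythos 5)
Your proof is correct and takes essentially the standard approach: the paper only cites Cowling and Walther for this theorem and sketches the strategy for the sufficiency---Sobolev embedding in time after Littlewood--Paley localization, then Plancherel---which is precisely your fundamental-theorem-of-calculus bound $\sup_t|F|^2\lesssim\|F\|_{L^2_t}^2+\|F\|_{L^2_t}\|F'\|_{L^2_t}$ combined with unitarity and the $2^k$ gain from $\partial_t$, summed over dyadic blocks. Your stationary-phase necessity example correctly rules out $s<\tfrac12$, and deferring the endpoint $s=\tfrac12$ to the cited references is appropriate.
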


The standard approach to this problem is to determine the minimal $s$ for which the space-time local maximal estimate 
\begin{align}\label{e:max wave single}
\big\| \sup_{t\in I}\big|e^{it\sqrt{-\Delta}}f\big| \big\|_{L_x^q(B_1)}\lesssim \|f\|_{H^s}
\end{align}
holds for all $f\in H^s(\R^d)$ and  for some $1\leq q<\infty$. Here, $B_r$ denotes the ball in $\R^d$ with radius $r$  centered at the origin and $I=[0,1]$.
The fact that the maximal estimate  \eqref{e:max wave single}\footnote{In fact, the weak-type estimate is enough.} implies the pointwise convergence \eqref{e:pw frac} is rather standard, while the converse can be deduced via a maximal principle due to Stein \cite{Stein61}. (This implication  also holds for $e^{it(-\Delta)^a}f$, $a>0$.) Historically, the estimate \eqref{e:max wave single} for $s>\frac12$ was first established by Cowling for $q=2$ \cite{Cowling}, and its 
sharpness was later shown  by Walther \cite{Walther}.

\begin{figure}
\begin{tikzpicture}[scale=4]

\path [name path=line1] (0,0.9)--(1,0);
\path [name path=line2] (1,0.2)--(0,0.7);
\path [name intersections={of=line1 and line2, by={X}}];

\fill [magenta, opacity=0.4](0,0.9)--(X)--(1,0.2)--(2,0.2)--(2,1.05)--(0,1.05);
\fill [cyan, opacity=0.3] (0,0.9)--(X)--(1,0.2)--(2,0.2)--(2,0)--(0,0);

\draw [->](0,0)--(2.1,0);
\draw [->](0,0)--(0,1.1);

\node [right] at (2,0.2) {$\frac12$};

\draw [magenta, dashed](1,0.2)--(2,0.2);

\fill [white] (1,0.2) circle (0.4pt);
\fill [cyan, opacity=0.2] (1,0.2) circle (0.4pt);
\draw [magenta] (1,0.2) circle (0.4pt);


\draw [opacity=1, dotted] (0,0.9)--(1,0);
\draw [magenta](0,0.9)--(X);

\draw [opacity=1, dotted] (1,0.2)--(0,0.7);
\draw [magenta](1,0.2)--(X);

\draw [] (1,-0.02)--(1,0.03);

\draw [] (X|-,-0.02)--(X|-,0.03);

\fill [white] (X) circle (0.5pt);
\draw [magenta] (X) circle (0.5pt);

\fill [white] (1,0.2) circle (0.5pt);
\fill [cyan, opacity=0.2] (1,0.2) circle (0.5pt);
\draw [magenta] (1,0.2) circle (0.5pt);

\node [below left]at (0,0) {$O$};
\node [below] at (2.1,0) {$\frac1q$};
\node [left] at (0,1.1) {$s$};
\node [below] at (2,0) {$1$};
\node [below] at (1,0) {$\frac12$};
\node [below] at (X|-,0) {$\frac{1}{q_d}$};

\node [left] at (0,0.7) {$\frac{d-1}{2q}$};
\node [left] at (0,0.9) {$\frac d2$};

 








\end{tikzpicture}
\caption{The maximal estimate \eqref{e:max wave single} is known to hold if $s\ge\max\{\frac12,s_d(q)\}$ for $q\in [1,\infty]\setminus\{q_d\}$ (the pink region), and to fail if $s<\max\{\frac12,s_d(q)\}$ for $q\in [1,\infty]$ (the blue region). It remains open whether \eqref{e:max wave single} holds for $q=q_d$ and $s=s_d(q_d)$.}
\label{FFF}
\end{figure}
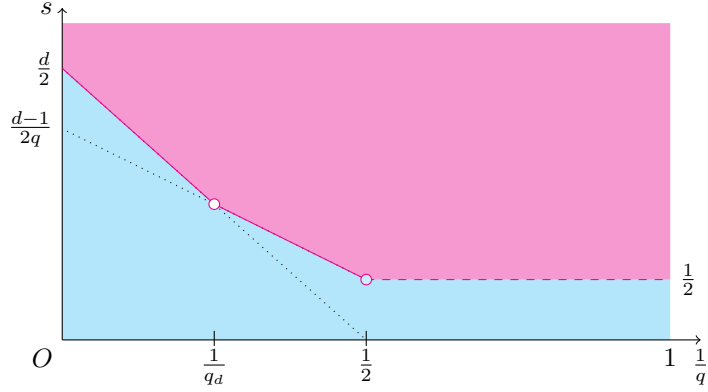

For the subsequent discussion, it is  worthwhile to summarize the known results for the estimate \eqref{e:max wave single}. Define 
\begin{align}\label{sd_sigma}
s_d(\sigma)
:=
\max\Big\{\frac d2-\frac d\sigma, ~\frac{d+1}4-\frac{d-1}{2\sigma}\Big\}.
\end{align}
The following result has been established progressively through the works \cite{Cowling, Walther, RV08, CLL25} (for the valid range of $q,s$, see Figure \ref{FFF}).

\begin{theorem}\label{t:RV}
Let $d\ge2$, $1\le q <\infty$, and $q_d=\frac{2(d+1)}{d-1}$.
    \begin{enumerate}[$(i)$]   
        \item \label{item:max single q<2} Let $q\in[1,2]$. Then, \eqref{e:max wave single} holds for all $f\in H^s(\R^d)$ if and only if $s>\frac12$.
        \item \label{item:max single q>2} Let $q\in(2,\infty)\backslash\{q_d\}$. Then, \eqref{e:max wave single} holds for all $f\in H^s(\R^d)$ if and only if $s\geq s_d(q)$.
        \item \label{item:max single st} Let $q=q_d$. Then, \eqref{e:max wave single}  holds  for all $f\in H^s(\R^d)$ if $s>s_d(q)$, and fails if $s<s_d(q)$.
    \end{enumerate}
\end{theorem}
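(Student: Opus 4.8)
The plan is to establish each item by proving the two inclusions separately: the ``only if''/``fails if'' directions by explicit test functions, and the ``if''/``holds if'' directions by a Littlewood--Paley reduction to a single frequency scale followed by interpolation between a few extremal estimates.

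\emph{Necessity.} The lower bound $s\ge\frac d2-\frac dq$ comes from testing \eqref{e:max wave single} at $t=0$ on $f$ with $\widehat f=\one_{\{|\xi|\le\lambda\}}$, which concentrates at scale $\lambda^{-1}$ and forces $\|f\|_{L^q(B_1)}\sim\lambda^{d-d/q}$. The bound $s\ge\frac{d+1}4-\frac{d-1}{2q}$ comes from a Knapp example $\widehat f=\one_P$ with $P$ a $\lambda\times\lambda^{1/2}\times\cdots\times\lambda^{1/2}$ plank at frequency $\lambda e_1$: for every $x$ in a slab of measure $\sim\lambda^{-(d-1)/2}$ one can choose $t\in I$ making $|e^{it\sqrt{-\Delta}}f(x)|\sim\lambda^{(d+1)/2}$, the point being that the wave packet travels at unit speed, so taking the supremum over $t\in I$ spreads the concentration set along $x_1$. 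Together these yield $s\ge s_d(q)$ for $q\ge2$. Finally $s>\frac12$ is forced for \emph{every} $q$ --- including the failure at the endpoint $s=\frac12$ when $q\le2$ --- by a superposition of many modulated wave packets combined with a number-theoretic (Gauss/Weyl sum) lower bound, as in \cite{Walther}. Since $\max\{\tfrac12,s_d(q)\}$ equals $\tfrac12$ for $q\le2$ and $s_d(q)$ for $q\ge2$, this reproduces exactly the stated thresholds.

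\emph{Sufficiency.} Inserting a smooth temporal cut-off $\psi$ with $\psi\equiv1$ on $I$ and a Littlewood--Paley decomposition $f=\sum_{\lambda\ge1}P_\lambda f$ ($\lambda$ dyadic), subadditivity of $\sup_{t\in I}$ reduces matters to the single-frequency estimate
\[
\Big\|\sup_{t\in I}\big|e^{it\sqrt{-\Delta}}P_\lambda f\big|\Big\|_{L^q(B_1)}\lesssim\lambda^{s_d(q)+\e}\,\|P_\lambda f\|_{L^2},
\]
after which summing a geometric series yields \eqref{e:max wave single} for all $s>\max\{\tfrac12,s_d(q)\}$ (for $q\le2$ one first uses $\|\cdot\|_{L^q(B_1)}\lesssim\|\cdot\|_{L^2(B_1)}$ and the $q=2$ case). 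To prove the displayed estimate the plan is to linearize, replacing $\sup_{t\in I}$ by evaluation at an arbitrary measurable $t(\cdot)\colon B_1\to I$, and to interpolate the resulting linear operator between three endpoints. At $q=\infty$: $\|e^{it\sqrt{-\Delta}}P_\lambda f\|_{L^\infty}\le\|\widehat{P_\lambda f}\|_{L^1}\lesssim\lambda^{d/2}\|P_\lambda f\|_{L^2}$, trivially. At $q=2$: this is Cowling's bound $\lambda^{1/2+\e}\|P_\lambda f\|_{L^2}$, obtained by Sobolev embedding in $t$ (the temporal frequency of $e^{it\sqrt{-\Delta}}P_\lambda f$ is $\sim\lambda$, so the $H^{1/2+\e}_t$-norm costs $\lambda^{1/2+\e}$) followed by Plancherel in $x$. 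At the Stein--Tomas exponent $q=q_d$: writing $\xi=r\omega$ in polar coordinates, $e^{it(x)\sqrt{-\Delta}}P_\lambda f(x)=c\int_\lambda^{2\lambda}r^{d-1}e^{it(x)r}\,E_{\S^{d-1}}h_r(rx)\,\d r$ with $h_r=\widehat{P_\lambda f}(r\,\cdot)$ and $E_{\S^{d-1}}$ the sphere extension operator; one discards the unimodular factor $e^{it(x)r}$, applies Minkowski in $r$, changes variables $y=rx$, invokes the unit-sphere Stein--Tomas estimate $\|E_{\S^{d-1}}g\|_{L^{q_d}(\R^d)}\lesssim\|g\|_{L^2(\d\sigma)}$, and applies Cauchy--Schwarz in $r$ against the weight $r^{d-1}$; the exponents combine to exactly $\lambda^{d/2-d/q_d}=\lambda^{s_d(q_d)}$. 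Since $s_d$ is affine in $1/q$ and matches the interpolated exponents at $1/q=0,\ 1/q_d,\ 1/2$, Riesz--Thorin (for fixed $t(\cdot)$, then supremizing over $t(\cdot)$) gives the single-frequency bound on $[2,q_d]$ and on $[q_d,\infty]$, hence \eqref{e:max wave single} for all $s>\max\{\tfrac12,s_d(q)\}$.

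\emph{The endpoint, and the main obstacle.} What remains is to remove the $\e$ and reach the sharp regularity $s=s_d(q)$ for $q\in(2,\infty)\setminus\{q_d\}$ in part (ii); this is the crux and is where the finer inputs of \cite{RV08,CLL25} are needed, since the Littlewood--Paley summation above loses a logarithm's worth of regularity at the critical index. For $q>q_d$ one can recover the endpoint via a fixed-time $L^q$ square-function estimate in place of the crude summation. For $2<q<q_d$ the natural tool is the Bourgain--Demeter $\ell^2$-decoupling for the light cone in $\R^{d+1}$, which gives the single-frequency estimate with an $\e$-loss and, through an $\e$-removal argument exploiting stability under frequency localization, the endpoint bound $s=s_d(q)$; I expect this decoupling-plus-$\e$-removal analysis to be the technically hardest part. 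At $q=q_d$ the argument above already yields $s>s_d(q_d)$, while the endpoint $s=s_d(q_d)$ would require a sharp ($\e$-free) cone restriction/extension estimate at the Stein--Tomas exponent that is presently unavailable --- which is why (iii) is left open, in parallel with the corresponding open problems for the cone multiplier and Bochner--Riesz.
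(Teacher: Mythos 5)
This statement is Theorem~\ref{t:RV}, which the paper presents as a known background result attributed to \cite{Cowling, Walther, RV08, CLL25}; the paper gives no proof of its own, only a one-paragraph indication that sufficiency for $s>s_d(q)$ follows from frequency localization, Sobolev embedding in time, and either Plancherel or the Stein--Tomas theorem for the cone, with details deferred to \cite[Appendix A]{KKS}. There is therefore no in-paper argument to compare yours against line by line; what can be assessed is whether your reconstruction is internally sound and consistent with the cited sources.

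On that basis, your sketch of sufficiency for $s>\max\{\tfrac12,s_d(q)\}$ is essentially correct and matches the approach the paper gestures at. You obtain the single-frequency bound by interpolating three linearized endpoints: the trivial $L^\infty$ bound $\lambda^{d/2}$, the Cowling/Sobolev-in-$t$ bound $\lambda^{1/2+\e}$ at $q=2$, and an $\e$-free $\lambda^{d/(d+1)}$ bound at $q=q_d$. The one cosmetic difference from the paper's hint is that you obtain the $q_d$ endpoint via the Stein--Tomas estimate for the \emph{sphere} $\S^{d-1}$, applied radially after Minkowski's inequality, rather than via the Stein--Tomas estimate for the \emph{cone} in $\R^{d+1}$; both have the same critical exponent $q_d=\tfrac{2(d+1)}{d-1}$ and give $\lambda^{d/2-d/q_d}=\lambda^{d/(d+1)}$, so the two routes are interchangeable here. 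Your exponent bookkeeping in that step, and the verification that the two affine branches of $s_d(\cdot)$ meet the interpolated exponents at $1/q\in\{0,1/q_d,1/2\}$, are correct. The necessity direction (a Fourier-ball concentration example for $s\ge\frac d2-\frac dq$, the Knapp wave packet swept in $t$ for $s\ge\frac{d+1}4-\frac{d-1}{2q}$, and a Walther-type superposition for the strict inequality $s>\tfrac12$) is also the standard route and your scaling is correct.

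For the endpoint $s=s_d(q)$ in part~(ii), you correctly identify that the dyadic summation above loses a logarithm and that sharper tools from \cite{RV08,CLL25} are needed, and you appropriately leave the $q=q_d$ endpoint open in accordance with part~(iii). I would only caution that your suggested mechanism for $2<q<q_d$ (Bourgain--Demeter decoupling plus $\e$-removal) is one plausible route but is presented as a guess; since you haven't carried it out, that portion remains a citation to the literature rather than a proof, which is acceptable given that the paper itself treats the whole theorem as a citation.
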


The result \eqref{item:max single q<2} follows from the works of Cowling and Walther \cite{Cowling, Walther}, combined with H\"older's inequality. For exponent $q>2$, Rogers--Villarroya treated both \eqref{item:max single q>2} and \eqref{item:max single st}, except at  the critical regularity  threshold  $s=s_d(q)$. The endpoint case $s = s_d(q)$ in \eqref{item:max single st} was recently settled by Cho, Li, and the second author \cite{CLL25}. What remains open is whether \eqref{e:max wave single} holds when $q = q_d$ and $s = s_d(q)$, although a restricted weak-type version of the estimate was obtained in \cite{CLL25}. For further related results, see also \cite{BG14, M13}.

Unlike in the Schr\"odinger case, it is not difficult to see that the condition $s>s_d(q)$ is sufficient for the maximal estimate \eqref{e:max wave single} to hold. One approach is to apply the Sobolev embedding after the usual frequency localization.  Depending on the value $q$, one then invokes either the Plancherel theorem or the Stein--Tomas restriction theorem for the cone.  This argument is, for instance, outlined in more detail in \cite[Appendix A]{KKS}.

For the sake of completeness, let us mention some results for $a \in (0,1)$, a regime in which much less is known and the behavior differs considerably from the other cases. In one dimension, Walther \cite{Walther} showed that \eqref{e:pw frac} holds if $s > \frac{a}{4}$, and that this condition is sharp up to the endpoint. In higher dimensions, \eqref{e:pw frac} holds due to Cowling \cite{Cowling}, at least if $s > \frac{a}{2}$.

\subsection{Extension to orthonormal systems}

Contrast to the classical problem with a single particle case (i.e., a single initial datum)  discussed above, Bez, Nakamura, and the second author \cite{BLN_Selecta} initiated the study of pointwise convergence for a system of infinitely many fermions, motivated by earlier works by Chen--Hong--Pavlovi\'c \cite{CHP17, CHP18} and Lewin--Sabin \cite{LS15, LS14}.

As an analogue of \eqref{e:fractional Schrodinger} for functions, let us consider the equation 
\begin{equation}\label{e:fractional vNS}
i\partial_t\gamma
=
-[(-\Delta)^\frac a2,\gamma],
\qquad 
\gamma(0)=\gamma_0
\end{equation}
for the time involving density operator $\gamma$.
This is the interaction-free version of the fractional von Neumann--Schr\"odinger equation, also often referred to as the Hartree--Fock equation or the quantum Liouville equation.

To clarify the relationship between \eqref{e:fractional Schrodinger} and \eqref{e:fractional vNS}, consider the density operator to be the rank-one projection $\gamma_0=\Pi_f$ defined by 
\[
\Pi_f\, g=\langle g,f \rangle f,
\]
where we assume $\|f\|_{L^2}=1$. With this choice of $\gamma_0$, we have 
\[
\gamma
=
e^{-it(-\Delta)^\frac a2} \gamma_0 e^{it(-\Delta)^\frac a2}
=
\Pi_{e^{-it(-\Delta)^{\frac a2}}\!f},
\]
which is a solution to \eqref{e:fractional vNS}.  In general, by spectral decomposition,
the solution to \eqref{e:fractional vNS} is given by
\[
\gamma(t)
=
e^{it(-\Delta)^\frac a2} \gamma_0 e^{-it(-\Delta)^\frac a2}. 
\]

For $\beta\ge1$, let $\mathcal C^\beta=\mathcal C^\beta(L^2(\R^{d}))$ denote the Schatten class, consisting of compact self-adjoint operators defined on $L^2(\R^{d})$. The Schatten norm is given by $\|T\|_{\mathcal C^\beta}= \|\lambda_j\|_{\ell^\beta}$ where $\lambda_j$ are the eigenvalues of $\sqrt{TT^*}$ (hence $\lambda_j$ are nonnegative real numbers).

We also define a Sobolev-type Schatten space by
\[
\mathcal C^{\beta,s} =\big\{ \gamma \in \text{Com}(H^{-s}(\R^d),H^s(\R^d)):
\big\|\langle -\Delta \rangle^{s}\gamma \langle -\Delta \rangle^{s}\big\|_{\mathcal C^\beta(L^2(\R^d))}<\infty\}.
\]
Here, $\langle-\Delta \rangle =(1-\Delta)^{\frac12}$, and $\text{Com}(H^{-s}(\R^d),H^s(\R^d))$ denotes the set of compact operators from $H^{-s}(\R^d)$ to $H^s(\R^d)$.
Note that if $\gamma_0 \in \mathcal C^{\beta,s}$, then we may write $\gamma_0 = \sum_j\lambda_j\Pi_{f_j}$ where $\{f_j\}$ is an orthonormal system in $H^s(\R^d)$.

To study the dynamics of a system of infinitely many particles, it is useful to define the so-called density of $\gamma(t)$, denoted by $\rho_{\gamma(t)}$. Formally, it is given by the kernel (a function on $\R^d\times\R^d$) of $\gamma(t)$ restricted on the diagonal. When $\gamma_0$ is the finite-rank operator associated with orthonormal functions $f_1,\dots,f_N\in L^2(\R^d)$ and nonnegative scalars $\lambda_1,\dots,\lambda_N\geq0$ given by 
\[
\gamma_{0}g(x)
=
\sum_{j=1}^N \Pi_{f_j} g(x)
=
\int g(y) \sum_{j=1}^N \lambda_j f_j(x) \overline{f_j}(y)\,\d y,
\]
the density is defined by evaluating the kernel on the diagonal:
\[
\rho_{\gamma_{0}}(x)=\sum_{j=1}^N \lambda_j |f_j(x)|^2.
\]
One can then extend it to an infinite-rank operator $\gamma_0\in\C^{\beta,s}$ by taking limits and this is well-defined whenever $s>\frac d2-\frac{d}{2\beta}$ (for more details, see \cite[Section 6]{BKS_TLMS}).

Within this setting, a natural analogue of Carleson’s problem for the von Neumann--Schr\"odinger equation was formulated in \cite{BLN_Selecta} (see also \cite{BKS_TLMS}): namely, to determine the largest class of initial states $\gamma_0\in \C^{\beta,s}$ for which the pointwise convergence
\begin{equation}\label{e:pw density}
    \lim_{t\to0}\rho_{\gamma(t)}(x)
    =
    \rho_{\gamma_0}(x),\quad \ae
\end{equation}
holds.   The following were obtained in \cite{BLN_Selecta,BKS_TLMS}.

\begin{theorem}\label{thm:fractional}
Let $d\geq1$ and $s\in(0,\frac d2)$. 
\begin{enumerate}[(i)]
    \item For $d\ge1$, $a\in(1,\infty)$, and $s\in[\frac d4, \frac d2)$, then the pointwise convergence \eqref{e:pw density} holds for all self-adjoint $\gamma_0\in \mathcal C^{\beta,s}$ if $\frac{1}{\beta} \in (1-\frac{2s}{d},1]$.
    \item \label{i:m less than 1}For $d=1$, $a\in(0,1)$, and $s\in (\frac a4,\frac12)$, then the pointwise convergence \eqref{e:pw density} holds for all self-adjoint $\gamma_0\in \mathcal C^{\beta,s}$ if $\frac1\beta \in( \max\{{1-2s}, \frac{2(1-a)}{2-a-4s}\}, 1]$. 
\end{enumerate}
\end{theorem}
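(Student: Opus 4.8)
\textbf{Proof strategy for Theorem~\ref{thm:fractional}.}

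The plan is to reduce both statements to a single \emph{maximal estimate} for the density and then interpolate. Concretely, following the scheme of \cite{BLN_Selecta,BKS_TLMS}, pointwise convergence \eqref{e:pw density} for all self-adjoint $\gamma_0\in\C^{\beta,s}$ follows (via the usual density argument, together with an orthonormal analogue of Stein's maximal principle) from the local maximal estimate
\begin{equation*}
\Big\|\sup_{t\in I}\big|\rho_{e^{it(-\Delta)^{a/2}}\gamma_0 e^{-it(-\Delta)^{a/2}}}\big|\Big\|_{L^1(B_1)}
\lesssim \big\|\langle-\Delta\rangle^{s}\gamma_0\langle-\Delta\rangle^{s}\big\|_{\C^{\beta}},
\end{equation*}
which by duality and the linearization $\gamma_0=\sum_j\lambda_j\Pi_{f_j}$ is equivalent to a vector-valued (Schatten-dual) bound for the operator $f\mapsto \sup_{t\in I}|e^{it(-\Delta)^{a/2}}f|^2$ applied to orthonormal families. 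First I would perform the standard Littlewood--Paley decomposition $\gamma_0=\sum_k \gamma_0^k$ into dyadic frequency pieces and record that, after rescaling, matters reduce to a frequency-localized estimate at scale $1$ with a gain of $2^{-2sk}$ from the Sobolev weights; summing in $k$ then forces exactly the stated relation between $1/\beta$ and $s$ through the constraint that the frequency-localized constant be $\lesssim 2^{(d-\frac d\beta)k}$ (for part~(i)) or the corresponding one-dimensional count (for part~(ii)).

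For part~(i), the core frequency-localized inequality is the orthonormal Strichartz-type estimate for the fractional propagator: one uses the known endpoint $\C^2$ (i.e.\ $\beta=2$) bound for $\|\sum_j\lambda_j|e^{it(-\Delta)^{a/2}}f_j|^2\|_{L^p_{t,x}}$ coming from the Stein--Tomas argument applied to the hypersurface $\{(\xi,|\xi|^a)\}$ (which is genuinely curved for $a>1$), interpolated against the trivial $\C^\infty$ (i.e.\ $\beta=\infty$) bound obtained from the single-function maximal estimate and the triangle inequality $|\rho_{\gamma_0}|\le \|\gamma_0\|_{\C^\infty}\sum|f_j|^2$ is \emph{not} available, so instead one interpolates in the Schatten index directly using the complex interpolation of Schatten spaces. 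The condition $s\ge\frac d4$ enters precisely as the threshold beyond which the $\beta=2$ estimate has a spatial integrability exponent $\ge1$ locally, and $1/\beta>1-2s/d$ is the line one gets after optimizing the interpolation. For part~(ii), $d=1$ and $a\in(0,1)$, the hypersurface $\{(\xi,|\xi|^a)\}$ still has nonvanishing curvature away from the origin, but the low-frequency (or near-origin) part behaves differently; here one splits into $|\xi|\sim 1$ after rescaling and handles the two competing bounds — one from the $L^2$-based (Plancherel/local smoothing) estimate giving the exponent $1-2s$, and one from a stationary-phase/van der Corput estimate adapted to the degenerate phase $|\xi|^a$ giving $\frac{2(1-a)}{2-a-4s}$ — taking the worse of the two, which is exactly the max appearing in \eqref{i:m less than 1}.

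The main obstacle, I expect, is establishing the sharp \emph{frequency-localized} $\C^2$ estimate with the correct power of the frequency $2^k$, since this is where the curvature of $|\xi|^a$ must be exploited quantitatively and uniformly down to the region where the surface degenerates (near $\xi=0$); in the wave case $a=1$ this degeneracy is absent (the cone is conic), which is why that case is cleaner, whereas for $a\neq1$ one must carefully decompose dyadically in $|\xi|$ and track how the van der Corput constants depend on the frequency annulus. A secondary technical point is justifying the reduction to finite-rank $\gamma_0$ and the passage to the limit in the definition of $\rho_{\gamma_0}$, which is where the hypothesis $s>\frac d2-\frac d{2\beta}$ is used to guarantee that the density is well-defined; this is routine given \cite[Section 6]{BKS_TLMS} but should be invoked explicitly. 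Once the frequency-localized bound is in hand, summing the dyadic pieces is straightforward because the Sobolev gain $2^{-2sk}$ dominates the geometric growth precisely on the stated range of $(\beta,s)$.
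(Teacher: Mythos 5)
The statement in question, Theorem~\ref{thm:fractional}, is not proved in this paper; it is presented as background and attributed entirely to \cite{BLN_Selecta,BKS_TLMS} (``The following were obtained in \cite{BLN_Selecta,BKS_TLMS}''). There is therefore no ``paper's own proof'' to compare your proposal against. The only information the paper gives about those proofs is that \cite{BLN_Selecta} established an appropriate Strichartz estimate and then applied the spatial/temporal variable-swapping trick of \cite{KPV91}, while \cite{BKS_TLMS} handled the remaining cases (including the fractional propagator of part~(ii)) via ``a more direct approach in the spirit of \cite{BBCR11,CS22}'', replacing the $L_x^{2,\infty}L_t^\infty$ norm with $L_x^1L_t^\infty(B_1\times I)$.

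Taken on its own terms, your outline (reduce to a local maximal estimate, dualize to a Schatten bound, Littlewood--Paley decompose, interpolate in the Schatten index) is in line with the general machinery of this circle of papers and in fact mirrors how the present paper proves its own main results: Proposition~\ref{prop:222} is the $\beta=2$ endpoint, interpolated against the trivial $\beta=1$ and $\beta=\infty$ bounds \eqref{e:Cinfty trivial} and \eqref{e:C1 trivial}. However, two points deserve flagging. First, your description of the $\beta=2$ input as ``Stein--Tomas for the surface $\{(\xi,|\xi|^a)\}$'' does not obviously match the paper's characterization of \cite{BKS_TLMS}, which indicates a dispersive-kernel, $L^1_xL^\infty_t$ style argument rather than a restriction-theoretic one; without seeing \cite{BKS_TLMS} it is unclear your route recovers the precise exponents in part~(ii), where the competing term $\frac{2(1-a)}{2-a-4s}$ encodes the degeneracy of $|\xi|^a$ near the origin. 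Second, your discussion of the $\beta=\infty$ endpoint is incomplete: you correctly note that the naive bound $\sum_j\lambda_j|f_j|^2\le\|\lambda\|_{\ell^\infty}\sum_j|f_j|^2$ is useless, but you do not state what replaces it. The usable $\beta=\infty$ estimate is Bessel's inequality applied to the frequency-localized pieces (cf.\ \eqref{e:C1 trivial}), and this should be made explicit since it is the endpoint that closes the interpolation.
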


Through a careful formulation of the problem, the critical case $s=\frac14$ in one dimension (for the standard Schr\"odinger equation) was studied in \cite{BLN_Selecta}. 
It was shown that the weak-type estimate
\begin{align}\label{weak-sch}
\Big\| \sum_{j\ge1} \lambda_j |e^{it\partial_x^2}f_j|^2 \Big\|_{L_x^{2,\infty}L_t^\infty(\R^{1+1})}
\lesssim \|\lambda\|_{\ell^\beta}
\end{align}
holds for all orthonormal functions $(f_j)_j\subset \dot{H}^{\frac14}(\R)$ if and only if $\beta<2$.\footnotemark  \, They established the appropriate Strichartz estimate and applied the trick of swapping the spatial and temporal variables, inspired by the work of \cite{KPV91}. The rest of the cases were addressed in \cite{BKS_TLMS}, where the maximal estimates \eqref{weak-sch}  were extended to the propagator $e^{it(-\Delta)^{\frac a2}}$, and $L_x^{2,\infty}L_t^\infty(\R^{1+1})$-norm was replaced by $L_x^1L_t^\infty(B_1\times I)$-norm, using a more direct approach in the spirit of \cite{BBCR11,CS22}. 
The range of $\beta$ given in Theorem \ref{thm:fractional} is sharp in the maximal sense, up to the endpoint.\\

\footnotetext{They even showed that the restricted weak-type estimate
$\ell^{2,1}$--$L_x^{2,\infty}L_t^\infty(\R^{1+1})$ estimate fails.}

\subsection{Main results}  It is then natural to ask the same question for the half-wave equation, $a=1$.
Analogous to the single-function case, the pointwise convergence \eqref{e:pw density} follows from the corresponding maximal estimate
\begin{align}\label{e:max q-beta}
\Big\|\sup_{t\in I} \sum_j \lambda_j|e^{it\sqrt{-\Delta}}f_j|^2 \Big\|_{L_x^\frac q2(B_1)}
\lesssim 
\|\lambda\|_{\ell^\beta}
\end{align}
for all $(f_j)_j$ in $H^s(\R^d)$ and $(\lambda_j)_j\in \ell^\beta$.
This estimate  may be viewed as a natural generalization of \eqref{e:max wave single}, which corresponds to the special case $(\lambda_1,\lambda_2,\dots)=(1,0,\dots)$.
In particular, \eqref{e:max q-beta} with $\beta=1$ and $q=2$  follows from the single particle case, which holds if and only if $s>\frac12$ (Theorem \ref{t:RV} \eqref{item:max single q<2}), via the triangle inequality. 
On the other hand, incorporating the orthogonality of $(f_j)_j$, one may invoke Bessel's inequality to conclude \eqref{e:max q-beta} with $\beta=\infty$ and 
$q=\infty$ whenever $s>\frac d2$. Therefore, the problem exhibits no interesting behavior when $d=1$, as this already covers the full Sobolev range via interpolation. For $d\geq2$, however, the situation becomes more delicate. 

Our goal is then, for a given $s \in (\tfrac{1}{2}, \tfrac{d}{2}]$, to determine the largest value of $\beta$ for which \eqref{e:max q-beta} holds. By adapting counterexamples from the single-particle case, Kinoshita and two of the present authors \cite{KKS} recently showed that the condition $s \geq s_d(2\beta)$ is necessary for \eqref{e:max q-beta} to be true. Combining this with the known condition $s \geq s_d(q)$ from the single-particle setting, it is natural to  conjecture that the estimate \eqref{e:max q-beta} holds if
\begin{equation}
\label{e:s} 
s \geq \max\{s_d(q), s_d(2\beta)\}.
\end{equation}

Thanks to the argument outlined above, showing the (essential) sufficiency of the regularities in the single-particle setting is relatively straightforward. In the multi-particle context, however, this approach is no longer effective, and one must resort to more delicate analysis to prove \eqref{e:max q-beta} for $s>s_d(2\beta)$. 
Some  nontrivial (non-sharp) results toward this conjecture were also obtained in \cite{KKS} when $2\le d \le4$.

\subsubsection*{Optimal  result in $\mathbb R^3$} 
In the present manuscript, we confirm that the conjecture is indeed valid up to the endpoint for
$d=3$.   Our first result is the following.

\begin{figure}
\centering
\begin{tikzpicture}[scale=5]
\path [name path=conjecture1](0,3/2)--(1,0);
\path [name path=conjecture2](0,1)--(1,1/2);
\path[name intersections={of=conjecture1 and conjecture2, by={A}}];

\fill [violet,opacity=0.4](0,3/2+0.05)--(0,3/2)--(A)--(1,1/2)--(1,3/2+0.05);
\fill [teal,opacity=0.3](0,3/2)--(A)--(1,1/2)--(1,0)--(0,0);

\draw [->](0,0)--(1.1,0);

\draw (1/2,-0.02)--(1/2,0.03);

\draw (-0.02,3/4)--(0.03,3/4);


\draw [very thick, white](A)--(1,1/2);
\draw [very thick, white](0,3/2)--(A);

\draw [->](0,0)--(0,3/2+0.1);

\draw [dotted](0,3/2)--(1,0);
\draw [dotted](0,1)--(1,1/2);


\node [below]at (1.1,0) {$\frac1\beta$};
\node [left]at (0,3/2+0.1) {$s$};

\node [below]at (1/2,0) {$\frac12$};
\node [below]at (1,0) {$1$};
\node [left]at (0,3/2) {$\frac32$};
\node [left]at (0,3/4) {$\frac34$};
\node [right]at (1,1/2) {$\frac12$};

\node [below left] at (0,0) {$O$};

\end{tikzpicture}
\caption{
This figure illustrates the  conditions in Theorem~\ref{t:max main3}  and Corollary~\ref{t:pw main} when $d=3$. In particular, the maximal estimate \eqref{e:max q-beta} holds with $q=2\beta$ if $s>s_3(2\beta)$ (the purple region), and fails if $s<s_3(2\beta)$ (the green region).
}
\label{f:3dim}
\end{figure}
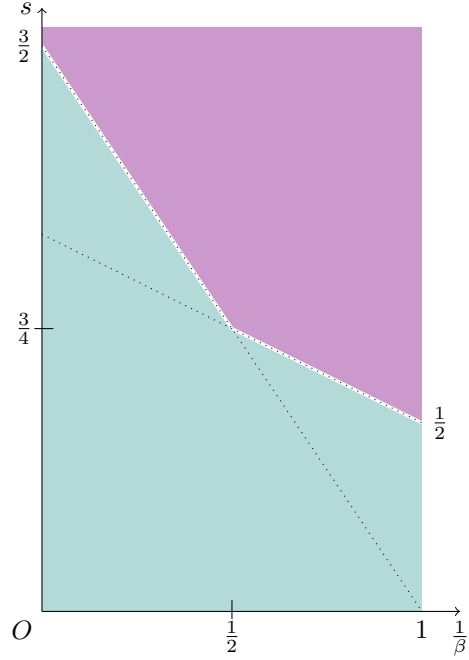

\begin{theorem}\label{t:max main3}
Let $d=3$, $q\geq2$, and $\beta\geq1$. Then, the estimate \eqref{e:max q-beta} 
holds for all orthonormal initial data $(f_j)_j$ in $H^s(\R^d)$ and any $(\lambda_j)\in \ell^\beta$, provided that \eqref{e:s} holds with strict inequality. 
\end{theorem}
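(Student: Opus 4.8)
The cases $\beta=1$ and $\beta=\infty$ are already known (by the triangle inequality and by Bessel's inequality, respectively), so we may assume $\beta\in(1,\infty)$. First I would reduce to the diagonal $q=2\beta$. The left side of \eqref{e:max q-beta} depends only on $q$ and $(\lambda_j)_j$; if $q\ge 2\beta$ then $\|\lambda\|_{\ell^{q/2}}\le\|\lambda\|_{\ell^\beta}$, so it suffices to prove \eqref{e:max q-beta} for the pair $(q,q/2)$, which lies on the diagonal and requires $s>s_3(q)=\max\{s_3(q),s_3(2\beta)\}$; if $q\le 2\beta$ then $\|\cdot\|_{L^{q/2}(B_1)}\lesssim\|\cdot\|_{L^\beta(B_1)}$ (as $B_1$ has finite measure), so it suffices to prove \eqref{e:max q-beta} for $(2\beta,\beta)$, which lies on the diagonal and requires $s>s_3(2\beta)=\max\{s_3(q),s_3(2\beta)\}$. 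Thus one is reduced to the diagonal estimate: for $\beta\in(1,\infty)$ and $s>s_3(2\beta)$,
\begin{equation}\label{e:st1}
\Big\|\sup_{t\in I}\sum_j\lambda_j\big|e^{it\sqrt{-\Delta}}f_j\big|^2\Big\|_{L^\beta(B_1)}\lesssim\|\lambda\|_{\ell^\beta}.
\end{equation}
Writing $f_j=\langle-\Delta\rangle^{-s}g_j$ with $(g_j)_j$ orthonormal in $L^2$, and letting $P_N$ ($N\ge1$ dyadic) be Littlewood--Paley projections with symbol $\psi(\xi/N)$, I would, by a now-standard frequency-localization argument (cf.\ \cite{KKS}; the strict inequality $s>s_3(2\beta)$ provides the room to sum the dyadic pieces), reduce \eqref{e:st1} to the single-frequency bound
\begin{equation}\label{e:st2}
\Big\|\sup_{t\in I}\sum_j\lambda_j\big|e^{it\sqrt{-\Delta}}P_Ng_j\big|^2\Big\|_{L^\beta(B_1)}\lesssim N^{2s_3(2\beta)}\,\|\lambda\|_{\ell^\beta},\qquad N\ge1,
\end{equation}
for $L^2$-orthonormal $(g_j)_j$; the interplay between $P_N$ and orthonormality is harmless since, with $\gamma=\sum_j\lambda_j\Pi_{g_j}$, one has $\sum_j\lambda_j|e^{it\sqrt{-\Delta}}P_Ng_j|^2=\rho_{e^{it\sqrt{-\Delta}}P_N\gamma P_Ne^{-it\sqrt{-\Delta}}}$ and $\|P_N\gamma P_N\|_{\mathcal C^\beta}\le\|\gamma\|_{\mathcal C^\beta}=\|\lambda\|_{\ell^\beta}$.

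Next, linearizing the supremum in \eqref{e:st2} by a measurable $\tau\colon B_1\to I$ and writing $\mathcal A=\mathcal A_{\tau,N}$ for the operator $\mathcal A g(x)=\one_{B_1}(x)(2\pi)^{-3}\!\int e^{ix\cdot\xi+i\tau(x)|\xi|}\psi(\xi/N)\widehat g(\xi)\,\d\xi$, the bound \eqref{e:st2} is equivalent, by duality in $L^\beta(B_1)$ and in $\mathcal C^\beta$, to the bilinear estimate
\begin{equation}\label{e:st3}
\Big|\int_{B_1}W\,\rho_{\mathcal A\gamma\mathcal A^*}\Big|\lesssim N^{2s_3(2\beta)}\,\|\gamma\|_{\mathcal C^\beta}\,\|W\|_{L^{\beta'}(B_1)},
\end{equation}
required uniformly in $\tau$, in $0\le W$, and in self-adjoint $\gamma$. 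For fixed $\tau$ this is a bound on a bilinear form on $\mathcal C^\beta\times L^{\beta'}(B_1)$, and since complex interpolation of the spaces $\mathcal C^\beta$ and $L^{\beta'}$ behaves in the familiar way with interpolation constant $1$ (hence uniformly in $\tau$), bilinear complex interpolation reduces \eqref{e:st3} for all $\beta\in[1,\infty]$ to the three endpoints $\beta\in\{1,2,\infty\}$. The endpoint $\beta=1$ (so $q=2$, target $N^{2s_3(2)}=N$) follows from the triangle inequality together with the elementary single-frequency estimate $\|\sup_{t\in I}|e^{it\sqrt{-\Delta}}P_Ng|\|_{L^2(B_1)}\lesssim N^{1/2}\|g\|_{L^2}$ (the fundamental theorem of calculus in $t$ combined with the $L^2$-unitarity of $e^{it\sqrt{-\Delta}}$; it is also contained in Theorem~\ref{t:RV}). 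The endpoint $\beta=\infty$ (so $q=\infty$, target $N^{2s_3(\infty)}=N^3$) follows from $\rho_{\mathcal A\gamma\mathcal A^*}(x)=\langle\gamma\,\mathcal A^*\delta_x,\mathcal A^*\delta_x\rangle$, the operator-norm bound $|\langle\gamma h,h\rangle|\le\|\gamma\|_{\mathcal C^\infty}\|h\|_{L^2}^2$, and $\|\mathcal A^*\delta_x\|_{L^2}^2=\int\psi(\xi/N)^2\,\d\xi\sim N^3$ (uniformly in $x$ and $\tau$). So everything comes down to \eqref{e:st3} for $\beta=2$.

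For $\beta=2$ we have $\beta'=2$, and $\mathcal C^2$ is the Hilbert--Schmidt class; by the Schatten H\"older inequality $\|W^{1/2}\mathcal A\gamma\mathcal A^*W^{1/2}\|_{\mathcal C^1}\le\|\gamma\|_{\mathcal C^2}\|W^{1/2}\mathcal A\|_{\mathcal C^4}^2=\|\gamma\|_{\mathcal C^2}\|W^{1/2}\mathcal A\mathcal A^*W^{1/2}\|_{\mathcal C^2}$, so \eqref{e:st3} at $\beta=2$ follows once
\begin{equation}\label{e:st4}
\big\|W^{1/2}\mathcal A\mathcal A^*W^{1/2}\big\|_{\mathcal C^2}\lesssim N^{2s_3(4)}\|W\|_{L^2(B_1)}=N^{3/2}\|W\|_{L^2(B_1)}
\end{equation}
is shown, uniformly in $\tau\colon B_1\to I$ and $0\le W$. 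The operator $\mathcal A\mathcal A^*$ has kernel $\mathcal K_N(x,y)=(2\pi)^{-3}\int e^{i(x-y)\cdot\xi+i(\tau(x)-\tau(y))|\xi|}\psi(\xi/N)^2\,\d\xi$, so writing out the Hilbert--Schmidt norm, \eqref{e:st4} is precisely the key integral estimate
\begin{equation}\label{e:key}
\int_{B_1}\!\!\int_{B_1}W(x)W(y)\,\big|\mathcal K_N(x,y)\big|^2\,\d x\,\d y\ \lesssim\ N^3\,\|W\|_{L^2(B_1)}^2 .
\end{equation}
After rescaling, $\mathcal K_N$ is $N^3$ times a unit-frequency half-wave kernel evaluated at $(N(x-y),N(\tau(x)-\tau(y)))$, hence $|\mathcal K_N|\lesssim N^3(1+N|x-y|)^{-1}$ with rapid decay off the light cone $\{|x-y|=|\tau(x)-\tau(y)|\}$ and size $\sim N^2/|x-y|$ on it; a crude Schur-type estimate for \eqref{e:key} loses a power of $N$, and since \eqref{e:key} is essentially sharp it must be proved by hand. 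I expect this to be the main obstacle.

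The plan for \eqref{e:key} is to pass to the Fourier side, where the left-hand side becomes
\begin{equation}\label{e:fside}
\int\!\!\int\psi(\xi/N)^2\psi(\eta/N)^2\,\big|F(\xi-\eta,\,|\xi|-|\eta|)\big|^2\,\d\xi\,\d\eta,\qquad F(\zeta,\rho)=\int_{B_1}W(x)\,e^{i(x\cdot\zeta+\tau(x)\rho)}\,\d x,
\end{equation}
so that $F$ is (a variant of) the Fourier transform of the measure $W(x)\,\d x\otimes\d\delta_{t=\tau(x)}$ on $\R^3\times\R$, supported in $B_1\times I$. For each $\xi$ with $|\xi|\sim N$ the map $\eta\mapsto(\xi-\eta,|\xi|-|\eta|)$ parametrizes a frequency-$N$ piece of the light cone, so \eqref{e:fside} is a weighted $L^2$ restriction-type quantity for $F$ over such pieces, averaged over $\xi$. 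Two structural features should drive the bound: (i) the light cone in $\R^{3+1}$ has $d-1=2$ nonvanishing principal curvatures, which supplies a gain upon averaging over the direction $\widehat\xi$ and thereby controls the exceptional directions for which the change of variables $x\mapsto x+\tau(x)\widehat\xi$ degenerates; and (ii) $\tau$ takes values in the bounded interval $I$, so $F$ varies only at unit scale in its last variable, which permits replacing the cone pieces by their tangent hyperplanes up to the $O(|\zeta|^2/N)$ curvature correction and then applying Plancherel in the remaining variables. Carrying this out carefully would give \eqref{e:key}, and with it Theorem~\ref{t:max main3}.
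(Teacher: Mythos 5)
Your reduction to the diagonal $q=2\beta$, the frequency localization, the duality and linearization turning the problem into a Schatten-class bound, the interpolation in $\beta$ pinning everything to the three endpoints $\beta\in\{1,2,\infty\}$, the trivial endpoints $\beta=1,\infty$, and the reformulation of the $\beta=2$ endpoint as the Hilbert--Schmidt estimate \eqref{e:key} are all sound, and they mirror the skeleton of the paper's argument. (The paper does not linearize the supremum --- it keeps the $L^4_xL^2_t$ duality and interpolates the $\ell^\beta\to L^\beta$ estimates \eqref{e:Cinfty trivial}, \eqref{e:C1 trivial}, \eqref{222} directly --- but the logic is the same, and you also correctly compute the target exponents $s_3(2\beta)$ at each endpoint.) The problem is that you never actually prove \eqref{e:key}: the final paragraph is a heuristic plan, and you acknowledge as much (``I expect this to be the main obstacle,'' ``carrying this out carefully would give \eqref{e:key}''). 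That estimate is precisely where the paper's new contribution lives --- it is the content of Proposition~\ref{p:bilinear}.

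The two ingredients your sketch is missing are the following. First, a dyadic decomposition of the kernel in the spatial separation $|x-y|\sim 2^{-l}$, $0\le l\le k$ (the paper's $K_{k,l}$). This is what reduces matters, at each spatial scale, to a bilinear form against a thickened cone of thickness $\delta=2^{l-k}$ whose Fourier transform (Lemma~\ref{l:dispersive2}) is localized in $|\tau|\lesssim\delta^{-1}$ and decays like $(1+|\xi|)^{-(d-1)/2}$ near the dual light cone and rapidly away from it. Second, the Bernstein step converting the resulting $L^2_\tau$ norms back to $L^1_t$ (in your linearized picture, back to $L^2_x$ of $W$): this is exactly where the scale-$\delta$ gain, and thus the sharp exponent $s_3(4)=3/4$, comes from, after summing over $l$. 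Without the decomposition, your tangent-plane-plus-Plancherel idea does not control the degenerate directions $\zeta\parallel\eta$, where $\nabla_\eta(|\zeta+\eta|-|\eta|)=\widehat{\zeta+\eta}-\widehat\eta$ vanishes and the level sets of $\rho(\eta)=|\zeta+\eta|-|\eta|$ blow up; and, as you already note, a Schur-type bound on $|\mathcal K_N|^2$ loses a power of $N$. So your reduction is correct and you identify the crux correctly, but the proof has a genuine gap at the single step that needed a new idea.
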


Consequently, the following pointwise convergence result can be deduced from Theorem \ref{t:max main3}, using an argument analogous to that in the classical (single-particle) setting (see \cite{BLN_Selecta,BKS_TLMS}).
 
\begin{corollary}\label{t:pw main}
    Let $d=3$ and $s\in(\frac12,\frac 32)$. If $\gamma_0\in \mathcal C^{\beta,s}$ is self-adjoint  and 
    \begin{equation}\label{ho13} 
   \frac1\beta \in\Big( \max\Big\{\frac{3-2s}{3}, {2(1-s)}\Big\}, 1\Big],
    \end{equation}
     then the pointwise convergence \eqref{e:pw density} holds (see also Figure \ref{f:3dim}).
\end{corollary}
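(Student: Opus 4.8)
\textbf{Proof proposal for Corollary~\ref{t:pw main}.}

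The plan is to deduce the pointwise convergence from the maximal estimate of Theorem~\ref{t:max main3}, following the standard scheme already used in the single-particle setting \cite{Cowling,Walther} and in the orthonormal setting in \cite{BLN_Selecta,BKS_TLMS}. First I would reduce to proving convergence for initial data lying in a dense subclass. Writing $\gamma_0=\sum_j\lambda_j\Pi_{f_j}$ with $(f_j)_j$ orthonormal in $H^s(\R^3)$ and $\lambda=(\lambda_j)_j\in\ell^\beta$, I would observe that $\rho_{\gamma(t)}(x)=\sum_j\lambda_j|e^{it\sqrt{-\Delta}}f_j(x)|^2$, and choose $q=2\beta$, which is admissible for Theorem~\ref{t:max main3} precisely when the condition \eqref{ho13} holds: indeed $q=2\beta\ge2$ is equivalent to $\beta\ge1$, while $s>\max\{s_3(q),s_3(2\beta)\}=s_3(2\beta)$ unwinds, using \eqref{sd_sigma} with $d=3$, to $\frac1\beta>\max\{\frac{3-2s}{3},2(1-s)\}$, i.e.\ exactly \eqref{ho13}. (One should also note $s\in(\tfrac12,\tfrac32)$ guarantees a nonempty range of $\beta$, and $s>\tfrac12$ ensures $s>\frac d2-\frac{d}{2\beta}$ so that the density $\rho_{\gamma_0}$ is well-defined, cf.\ \cite[Section~6]{BKS_TLMS}.)

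Next I would set up the density argument. Take a smooth compactly supported $g\in H^s(\R^3)$; for such $g$ the function $(x,t)\mapsto e^{it\sqrt{-\Delta}}g(x)$ is continuous, so $\lim_{t\to0}|e^{it\sqrt{-\Delta}}g(x)|^2=|g(x)|^2$ for every $x$. For general orthonormal $(f_j)_j$ and $\lambda\in\ell^\beta$, fix $\delta>0$ and pick, for each $j$, a smooth compactly supported $g_j$ with $\|f_j-g_j\|_{H^s}$ small; here one must be slightly careful, since the $g_j$ need not be orthonormal, but one can instead approximate the operator $\gamma_0$ itself in the $\mathcal C^{\beta,s}$-norm by a finite-rank operator $\tilde\gamma_0=\sum_{j\le N}\tilde\lambda_j\Pi_{\tilde f_j}$ with $(\tilde f_j)_{j\le N}$ orthonormal in $H^s$ and then, since a finite orthonormal system can be approximated by a smooth one while remaining orthonormal (Gram--Schmidt applied to smooth approximants), reduce to the case of finitely many smooth compactly supported, orthonormal data, for which $\lim_{t\to0}\rho_{\tilde\gamma(t)}=\rho_{\tilde\gamma_0}$ pointwise, hence a.e.

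The final step is the quantitative control of the error. Define the difference operator $\eta_0=\gamma_0-\tilde\gamma_0$, which is self-adjoint, lies in $\mathcal C^{\beta,s}$, and has $\mathcal C^{\beta,s}$-norm $\le\delta$; decompose $\eta_0=\eta_0^+-\eta_0^-$ into its positive and negative parts, each a nonnegative operator in $\mathcal C^{\beta,s}$ with norm $\lesssim\delta$, so that each admits an orthonormal expansion $\eta_0^\pm=\sum_j\mu_j^\pm\Pi_{h_j^\pm}$ with $\|\mu^\pm\|_{\ell^\beta}\lesssim\delta$ (after absorbing the Sobolev weights). Applying Theorem~\ref{t:max main3} to each of $\eta_0^\pm$ gives
\[
\Big\|\sup_{t\in I}\rho_{\eta^\pm(t)}\Big\|_{L^{\beta}_x(B_1)}\lesssim\|\mu^\pm\|_{\ell^\beta}\lesssim\delta,
\]
and since $|\rho_{\gamma(t)}-\rho_{\tilde\gamma(t)}|\le\rho_{\eta^+(t)}+\rho_{\eta^-(t)}$ pointwise, the triangle inequality yields $\big\|\sup_{t\in I}|\rho_{\gamma(t)}-\rho_{\tilde\gamma(t)}|\big\|_{L^\beta_x(B_1)}\lesssim\delta$; the same bound at $t=0$ controls $\|\rho_{\gamma_0}-\rho_{\tilde\gamma_0}\|_{L^\beta_x(B_1)}$. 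Then the oscillation
\[
\Lambda f(x):=\limsup_{t\to0}|\rho_{\gamma(t)}(x)-\rho_{\gamma_0}(x)|
\]
is bounded, for a.e.\ $x\in B_1$, by $\sup_t|\rho_{\gamma(t)}-\rho_{\tilde\gamma(t)}|(x)+\limsup_t|\rho_{\tilde\gamma(t)}-\rho_{\tilde\gamma_0}|(x)+|\rho_{\tilde\gamma_0}-\rho_{\gamma_0}|(x)$, whose middle term vanishes a.e.\ and whose outer terms have $L^\beta(B_1)$-norm $\lesssim\delta$; hence $\|\Lambda\|_{L^\beta(B_1)}\lesssim\delta$ for every $\delta>0$, forcing $\Lambda\equiv0$ a.e.\ on $B_1$, and by translation on all of $\R^3$. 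I expect the main technical obstacle to be the reduction to smooth orthonormal data — i.e.\ making the Gram--Schmidt approximation and the positive/negative decomposition of $\eta_0$ precise within the Sobolev-weighted Schatten space $\mathcal C^{\beta,s}$ while keeping all norms controlled — since everything else is a routine density-plus-maximal-function argument; this step is, however, carried out essentially as in \cite{BLN_Selecta,BKS_TLMS}, so I would cite those references for the details where appropriate.
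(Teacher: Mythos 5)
Your proposal is correct and takes essentially the same route the paper intends. The paper itself gives no proof of Corollary~\ref{t:pw main}; it simply states that the result follows from Theorem~\ref{t:max main3} ``using an argument analogous to that in the classical (single-particle) setting'' and refers to \cite{BLN_Selecta,BKS_TLMS}. Your write-up supplies precisely those details: the choice $q=2\beta$, the verification that \eqref{ho13} is the unwinding of $s>s_3(2\beta)$, the reduction to finite-rank smooth orthonormal data, the positive/negative decomposition of the error operator $\eta_0=\gamma_0-\tilde\gamma_0$ inside $\mathcal C^{\beta,s}$, the application of the maximal inequality to each piece, and the standard oscillation/limsup argument followed by translation to cover all of $\R^3$. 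All of that is the standard density-plus-maximal-function scheme of \cite{BLN_Selecta,BKS_TLMS}, and it goes through.

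Two small points worth tightening. First, the positive/negative decomposition should be carried out at the level of $A:=\langle -\Delta\rangle^s\eta_0\langle -\Delta\rangle^s\in\mathcal C^\beta(L^2)$ rather than directly on $\eta_0$: splitting $A=A^+-A^-$ and then setting $\eta_0^\pm:=\langle -\Delta\rangle^{-s}A^\pm\langle -\Delta\rangle^{-s}$ is what gives $\|\eta_0^\pm\|_{\mathcal C^{\beta,s}}=\|A^\pm\|_{\mathcal C^\beta}\le\|A\|_{\mathcal C^\beta}\le\delta$ together with orthonormal $H^s$-expansions; this is exactly the ``absorbing the Sobolev weights'' you gesture at, so it is only a matter of phrasing. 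Second, the parenthetical claim that ``$s>\tfrac12$ ensures $s>\tfrac d2-\tfrac d{2\beta}$'' is not a consequence of $s>\tfrac12$ alone; what guarantees the well-definedness of $\rho_{\gamma_0}$ is the first alternative in \eqref{ho13}, namely $\tfrac1\beta>\tfrac{3-2s}{3}$, which is equivalent to $s>\tfrac32-\tfrac3{2\beta}$. Since that condition is already part of your hypothesis, nothing is lost, but the justification should be attributed to \eqref{ho13} rather than to the lower bound on $s$.
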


The condition \eqref{ho13} is simply a reformulation of $s>s_3(2\beta)$  (see Figure \ref{f:3dim}).

The results in \cite{KKS} essentially rely on a geometric approach that emphasizes the spatial side rather than the Fourier side. To estimate the key integrals, the authors analyzed the geometric interactions between two thickened cones arising from frequency-localized estimates (see, for example, Proposition \ref{p:bilinear}). However, this method becomes increasingly inefficient in higher dimensions, and meaningful bounds were obtained only in the low-dimensional cases $d = 2, 3, 4$.

Such a geometric argument appears insufficient to capture subtle cancellation effects. In contrast, our approach exploits the decay properties of the Fourier transform of the conic measure and carefully analyzes the kernels of the associated operators on both the spatial and the frequency sides. As a result, we obtain estimates up to the sharp regularity threshold (see Section \ref{sec:4} below). This constitutes the main novelty of the present paper.

\subsubsection*{Results for  the cases $d \ge 4$ and $d = 2$} 
Our approach in this paper becomes less effective in dimensions $d \neq 3$. Nonetheless, it still yields sharp results for a certain range of $\beta$, which we state separately for the cases $d \ge 4$ and $d = 2$ to highlight the different behaviors. Although the results in these cases are not sharp for the full range of $\beta$, they represent significant improvements over earlier results.

\begin{figure}
\centering
\begin{tikzpicture}[scale=5]

\def\h{1.8} 

\path [name path=conjecture1](0,\h)--(1,0);
\path [name path=conjecture2](0,\h/2)--(1,1/2);
\path[name intersections={of=conjecture1 and conjecture2, by={A}}];


\fill [violet,opacity=0.4](0,\h+0.05)--(0,\h)--(1/2,\h/2)--(1,1/2)--(1,\h+0.05);
\fill [teal,opacity=0.3](0,\h)--(A)--(1,1/2)--(1,0)--(0,0);

\draw [->](0,0)--(1.1,0);


\draw (A|-,-0.02)--(A|-,0.03);
\node [below]at (A|-,0) {$\frac{d-1}{d+1}$};

\draw (1/2,-0.02)--(1/2,0.03);
\node [below]at (1/2,0) {$\frac12$};

\draw (-0.02,|- A)--(0.03,|- A);
\node [left]at (0,|- A) {$\frac34$};

\draw (-0.02,\h/2)--(0.03,\h/2);
\node [left]at (0,\h/2) {$\frac d4$};



\draw [very thick, white](0,\h)--(A);
\draw [very thick, white](1/2,\h/2)--(1,1/2);
\draw [very thick, white](A)--(1,1/2);

\draw [->](0,0)--(0,\h+0.1);

\draw [dotted](0,\h)--(1,0);
\draw [dotted](0,\h/2)--(1,1/2);


\node [below]at (1.1,0) {$\frac1\beta$};
\node [left]at (0,\h+0.1) {$s$};

\node [below]at (1,0) {$1$};
\node [left]at (0,\h) {$\frac d2$};

\node [right]at (1,1/2) {$\frac12$};

\node [below left] at (0,0) {$O$};

\end{tikzpicture}
\caption{
This figure illustrates the conditions in Theorem~\ref{t:max main4}  and Corollary~\ref{t:pw main4} when $d\ge4$. In particular, the maximal estimate \eqref{e:max q-beta} holds with $q=2\beta$ if $s>\max\{s_d(2\beta),  
\frac{d-1}2-\frac{d-2}{2\beta}\}$ (the purple region), and fails if $s<s_d(2\beta)$ (the green region).
Therefore, the condition is sharp for $q/2,\beta\in [2,\infty]$, up to the critical lines, while it remains open whether $s>s_d(2\beta)$ is also sufficient for $\beta\in(1,2)$.
}
\label{f:4dim}
\end{figure}

\begin{theorem}\label{t:max main4}  Let $d\ge 4$,  $q\in [4,\infty]$, and  $\beta\in [2,\infty]$.  
Then, the estimate \eqref{e:max q-beta} 
holds for all orthonormal initial data $(f_j)_j\in H^s(\R^d)$ and any $(\lambda_j)\in \ell^\beta$, provided that \eqref{e:s} holds with strict inequality. 
\end{theorem}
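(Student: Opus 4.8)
The plan is to reduce \eqref{e:max q-beta} first to a single-frequency estimate, then by interpolation to four endpoint cases, of which only one carries real content. Let $P_\mu$, $\mu=2^k\ge 1$, be an inhomogeneous Littlewood--Paley projection (with $\widehat{P_\mu f}$ supported where $|\xi|\sim\mu$ for $\mu\ge 2$, and where $|\xi|\lesssim 1$ for $\mu=1$). A weighted Cauchy--Schwarz inequality in $k$ followed by the triangle inequality in $L^{q/2}(B_1)$ (legitimate since $q\ge 4$) shows that it suffices to prove, for every $\mu\ge 1$ and every $s'>\max\{s_d(q),s_d(2\beta)\}$,
\[
\Big\|\sup_{t\in I}\sum_j\lambda_j\big|e^{it\sqrt{-\Delta}}\widetilde P_\mu h_j\big|^2\Big\|_{L^{q/2}(B_1)}\lesssim \mu^{2s'}\,\|\lambda\|_{\ell^\beta}
\]
for all orthonormal systems $(h_j)_j$ in $L^2(\R^d)$, where $\widetilde P_\mu$ denotes a fattened projection with a smooth symbol adapted to $\{|\xi|\sim\mu\}$. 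Indeed, conjugating by $(1-\Delta)^{s/2}$ turns this into \eqref{e:max q-beta} restricted to frequency $\mu$ with a gain $\mu^{2(s'-s)}$, and choosing $\max\{s_d(q),s_d(2\beta)\}<s'<s$ makes the resulting geometric series in $k$ converge.

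Since $q\ge 4$, $\beta\ge 2$ and $d\ge 4$, one has $s_d(q)=\tfrac d2-\tfrac dq$ and $s_d(2\beta)=\tfrac d2-\tfrac d{2\beta}$, so $\max\{s_d(q),s_d(2\beta)\}=\tfrac d2-\tfrac d{\max\{q,2\beta\}}$ is an affine function of $(\tfrac1q,\tfrac1\beta)$ on each of the two regions $\{q\le 2\beta\}$ and $\{q\ge 2\beta\}$; the Lebesgue and Schatten exponents entering the dual formulation of the displayed estimate are likewise affine in $(\tfrac1q,\tfrac1\beta)$. Hence, by the duality principle and the complex-interpolation scheme for estimates on orthonormal systems (as in \cite{BLN_Selecta,BKS_TLMS,KKS}), it is enough to establish the single-frequency bound at the four corners $(q,\beta)\in\{(\infty,\infty),(4,\infty),(\infty,2),(4,2)\}$, with $s'=\tfrac d2$ at the first three and $s'=\tfrac d4$ at $(4,2)$. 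The first three are immediate from Bessel's inequality: writing $e^{it\sqrt{-\Delta}}\widetilde P_\mu h_j(x)=\langle h_j,\,e^{-it\sqrt{-\Delta}}\widetilde P_\mu\delta_x\rangle$ and using orthonormality of $(h_j)_j$ gives
\[
\sum_j\lambda_j\big|e^{it\sqrt{-\Delta}}\widetilde P_\mu h_j(x)\big|^2\le \|\lambda\|_{\ell^\infty}\big\|\widetilde P_\mu\delta_x\big\|_{L^2}^2\lesssim \mu^d\,\|\lambda\|_{\ell^2}
\]
uniformly in $t\in I$ and $x\in B_1$, which suffices after bounding $L^{q/2}(B_1)$ by $L^\infty(B_1)$.

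The corner $(q,\beta)=(4,2)$ is the heart of the matter. Linearize the supremum by a measurable selection $t^*\colon B_1\to\overline I$ and set $Ah(x)=e^{it^*(x)\sqrt{-\Delta}}\widetilde P_\mu h(x)$. By the duality principle with $\beta=2$, the required bound is equivalent to $\|WAA^*\overline W\|_{\mathcal C^2}\lesssim\mu^{d/2}\|W\|_{L^4(B_1)}^2$, i.e., since $\mathcal C^2$ is the Hilbert--Schmidt class, to
\[
\iint_{B_1\times B_1}|W(x)|^2|W(y)|^2\,|K_\mu(x,y)|^2\,dx\,dy\;\lesssim\;\mu^{d}\,\|W\|_{L^4(B_1)}^4,
\]
where $K_\mu(x,y)=\int_{\R^d}e^{i(x-y)\cdot\xi+i(t^*(x)-t^*(y))|\xi|}\,a(\xi/\mu)\,d\xi$ and $a$ is a fixed smooth bump on $\{|\cdot|\sim1\}$. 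Here the naive pointwise bound $|K_\mu(x,y)|\lesssim\mu^d(1+\mu|x-y|)^{-(d-1)/2}$, which encodes the $d-1$ nonvanishing principal curvatures of the light cone, is off by exactly one power of $\mu$ and is insufficient; this is precisely the loss that the geometric approach of \cite{KKS} cannot avoid, and recovering it is the main obstacle.

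The remedy is to work on the Fourier side. After rescaling $\xi=\mu\omega$ and expanding the modulus-square,
\[
\iint_{B_1\times B_1}|W(x)|^2|W(y)|^2|K_\mu(x,y)|^2\,dx\,dy=\mu^{2d}\iint a(\omega)a(\omega')\,\big|\widehat\nu\big(\mu(\omega-\omega'),\,\mu(|\omega|-|\omega'|)\big)\big|^2\,d\omega\,d\omega',
\]
where $\nu$ is the push-forward of $|W|^2\,dx$ under $x\mapsto(x,t^*(x))$ and $\widehat\nu$ its Fourier transform on $\R^{d+1}$. The secant map $(\omega,\omega')\mapsto(\omega-\omega',|\omega|-|\omega'|)$ sends the annulus into the solid light cone $\{|\zeta_0|\le|\zeta|\lesssim1\}$, and a direct computation (the associated Gram determinant equals $2^d(\tfrac32-\tfrac12\widehat\omega\cdot\widehat{\omega'})$, which is bounded above and below, and the fibres have uniformly bounded measure) shows that the push-forward of $a(\omega)a(\omega')\,d\omega\,d\omega'$ has a bounded density there. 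Consequently the right-hand side is $\lesssim\mu^{2d}\mu^{-(d+1)}\int_{|(\zeta,\zeta_0)|\lesssim\mu}|\widehat\nu(\zeta,\zeta_0)|^2\,d\zeta\,d\zeta_0$. Finally, an Agmon--Hörmander-type $L^2$-restriction bound for $\widehat\nu$ over balls — proved by convolving $\nu$ with a bump at scale $\mu^{-1}$ and using $|x-y|\le|(x,t^*(x))-(y,t^*(y))|$ — gives $\int_{|(\zeta,\zeta_0)|\le R}|\widehat\nu|^2\lesssim R\,\|W\|_{L^4(B_1)}^4$ for all $R\ge1$, uniformly in the measurable function $t^*$. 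Taking $R\sim\mu$ closes the estimate at the corner $(4,2)$, and with it Theorem \ref{t:max main4}.
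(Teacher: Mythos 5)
Your overall scheme—linearize the supremum, pass to the dual formulation, and isolate the $\beta=2$, $q=4$ corner as the sole nontrivial input, then interpolate—closely mirrors the paper's strategy of reducing to a frequency-localized $\mathcal C^2$ estimate (Proposition~\ref{prop:222}) and interpolating against the trivial $\beta=1,\infty$ bounds. Your treatment of the $(4,2)$ corner, however, is a genuinely different route: where the paper decomposes the kernel $K_k$ by spatial scale and feeds the pieces into the bilinear estimate of Proposition~\ref{p:bilinear} (itself proved via Lemma~\ref{l:dispersive2}, Bernstein, and Cauchy--Schwarz on the Fourier side), you rescale, expand $|K_\mu|^2$, pass to the secant map $\Phi(\omega,\omega')=(\omega-\omega',|\omega|-|\omega'|)$, and close via a push-forward density bound plus an Agmon--H\"ormander estimate. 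Your Agmon--H\"ormander lemma is correct (the inequality $|x-y|\le|(x,t^*(x))-(y,t^*(y))|$ is exactly what makes the convolution argument go through uniformly in $t^*$), and the reduction to four corners is sound once one observes that the target regularity is affine on each of the triangles $\{q\le 2\beta\}$, $\{q\ge 2\beta\}$.

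The gap is in the push-forward density claim. The Gram matrix is
\[
D\Phi\,(D\Phi)^{T}=\begin{pmatrix} 2I_d & \widehat\omega+\widehat{\omega'} \\ (\widehat\omega+\widehat{\omega'})^{T} & 2 \end{pmatrix},
\qquad
\det = 2^{d}\Big(2-\tfrac12|\widehat\omega+\widehat{\omega'}|^{2}\Big)=2^{d}\big(1-\widehat\omega\cdot\widehat{\omega'}\big),
\]
not $2^{d}(\tfrac32-\tfrac12\widehat\omega\cdot\widehat{\omega'})$. This determinant is \emph{not} bounded below: it vanishes on the set $\widehat\omega=\widehat{\omega'}$, which in particular contains the entire diagonal $\omega=\omega'$ mapping to the vertex $\zeta=0$. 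A direct computation of the co-area density shows that, writing $|\zeta|=\rho$ and $\zeta_0=\rho\cos\alpha$, the fiber measure is $\sim\sin^{d-2}\alpha$ while the Jacobian factor is $\sim\rho\sin\alpha$, so the push-forward density behaves like $\rho^{-1}\sin^{d-3}\alpha$. For $d\ge4$ it is indeed controlled near the boundary of the cone at fixed $\rho\sim 1$ (it even vanishes there), but near the vertex it blows up like $|\zeta|^{-1}$. Consequently the step
\[
\iint a(\omega)a(\omega')\,|\widehat\nu(\mu\Phi(\omega,\omega'))|^{2}\,d\omega\,d\omega' \;\lesssim\; \mu^{-(d+1)}\int_{|(\zeta,\zeta_0)|\lesssim\mu}|\widehat\nu|^{2}
\]
is not justified: one only has $\lesssim\mu^{-d}\int_{|\zeta_0|\le|\zeta|\lesssim\mu}|\zeta|^{-1}|\widehat\nu|^{2}\,d\zeta\,d\zeta_0$. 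The argument is salvageable—dyadically decompose in $|\zeta|\sim 2^{j}$ and apply the Agmon--H\"ormander bound over the ball of radius $\sim 2^{j}$ on each piece, which yields $\mu^{-d}\log\mu\,\|W\|_{L^4}^{4}$; the logarithm is harmless because the theorem only requires strict inequality $s>\max\{s_d(q),s_d(2\beta)\}$ (indeed the paper's own $\lessapprox$ hides exactly such a loss when summing over $l$). But as written, the ``bounded density'' claim and the Gram determinant supporting it are incorrect, and without the dyadic fix the proof does not close.
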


Similarly as before, we can deduce the following pointwise convergence results from Theorem \ref{t:max main4}.

\begin{corollary}\label{t:pw main4}
Let $d\geq 4$ and $s\in [\frac d4,  \frac d2] $.
If $\gamma_0\in \mathcal C^{\beta,s}$ is self-adjoint  and 
   $ \beta \in [2,\frac{d}{d-2s})$, 
     then the pointwise convergence \eqref{e:pw density} holds.
\end{corollary}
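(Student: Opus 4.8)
The plan is to deduce the pointwise statement from the maximal bound of Theorem~\ref{t:max main4} by the standard density‑point (oscillation) argument in the spirit of Stein's maximal principle, exactly as was done for $a\ne1$ in \cite{BLN_Selecta,BKS_TLMS}. Two things must be checked before the transference machinery can be turned on: that the hypotheses on $(\beta,s)$ place us strictly inside the range of Theorem~\ref{t:max main4} for the exponent $q=2\beta$, and that the density $\rho_{\gamma_0}$ is a well‑defined object for $\gamma_0\in\mathcal C^{\beta,s}$ in the first place.

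First I would settle the numerology. With $q=2\beta$, the assumption $\beta\in[2,\tfrac d{d-2s})$ gives $q\in[4,\infty)$ and $\tfrac1\beta>1-\tfrac{2s}{d}$, i.e. $s>\tfrac d2-\tfrac d{2\beta}$. For $d\ge4$ and $\beta\ge2$ one has $\beta\ge2>\tfrac{d+1}{d-1}$, which is precisely the inequality $\tfrac d2-\tfrac d{2\beta}\ge\tfrac{d+1}4-\tfrac{d-1}{4\beta}$, so $s_d(2\beta)=\tfrac d2-\tfrac d{2\beta}$ and therefore $s>s_d(2\beta)=\max\{s_d(q),s_d(2\beta)\}$; thus \eqref{e:max q-beta} holds with this $q$ for every system orthonormal in $H^s$ by Theorem~\ref{t:max main4}. (When $s=\tfrac d4$ the interval $[2,\tfrac d{d-2s})$ is empty and there is nothing to prove.) Moreover $s>\tfrac d2-\tfrac d{2\beta}$ is exactly the condition under which $\rho_{\gamma_0}$ is well defined, so this part of the hypothesis plays a double role.

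Next I would run the oscillation argument. Spectrally decompose $\gamma_0=\sum_j\lambda_j\Pi_{f_j}$ with $(f_j)_j$ orthonormal in $H^s$ and $\|(\lambda_j)\|_{\ell^\beta}=\|\gamma_0\|_{\mathcal C^{\beta,s}}$; since $e^{it\sqrt{-\Delta}}$ is unitary, $\rho_{\gamma(t)}(x)=\sum_j\lambda_j|e^{it\sqrt{-\Delta}}f_j(x)|^2$. Fix a ball $B$; by translating and rescaling one gets \eqref{e:max q-beta} with $B_1$ replaced by $B$ and $I$ by any fixed interval. Set $\mathcal O(x):=\limsup_{t\to0}|\rho_{\gamma(t)}(x)-\rho_{\gamma_0}(x)|$. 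For a finite‑rank $\tilde\gamma_0=\sum_{k\le N}\mu_k\Pi_{h_k}$ with Schwartz $h_k$ one has $e^{it\sqrt{-\Delta}}h_k\to h_k$ uniformly, hence $\rho_{\tilde\gamma(t)}\to\rho_{\tilde\gamma_0}$ everywhere, so $\mathcal O(x)\le\sup_{t\in I}|\rho_{\gamma(t)}(x)-\rho_{\tilde\gamma(t)}(x)|+|\rho_{\gamma_0}(x)-\rho_{\tilde\gamma_0}(x)|$. Writing the \emph{self‑adjoint} difference $\gamma_0-\tilde\gamma_0=\sum_k\nu_k\Pi_{e_k}$ with $(e_k)_k$ orthonormal in $H^s$ and $\nu_k$ possibly negative, one bounds $|\rho_{\gamma(t)}-\rho_{\tilde\gamma(t)}|\le\sum_k|\nu_k|\,|e^{it\sqrt{-\Delta}}e_k|^2$ and, taking $t=0$ inside the supremum, $|\rho_{\gamma_0}-\rho_{\tilde\gamma_0}|\le\sup_{t\in I}\sum_k|\nu_k|\,|e^{it\sqrt{-\Delta}}e_k|^2$. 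Taking $L^{q/2}(B)$‑norms (a genuine norm since $q/2=\beta\ge1$) and applying \eqref{e:max q-beta} twice yields
\[
\|\mathcal O\|_{L^{q/2}(B)}\lesssim\big\|(\nu_k)\big\|_{\ell^\beta}=\|\gamma_0-\tilde\gamma_0\|_{\mathcal C^{\beta,s}}.
\]
Since finite‑rank operators with Schwartz eigenfunctions are dense in $\mathcal C^{\beta,s}$ (truncate the sum, legitimate as $\beta<\infty$, then approximate each $f_j$ in $H^s$) and the left side is independent of $\tilde\gamma_0$, it vanishes; hence $\mathcal O=0$ a.e. on $B$, and letting $B$ run over a countable cover of $\R^d$ gives $\rho_{\gamma(t)}\to\rho_{\gamma_0}$ a.e.

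I do not expect a genuine analytic obstacle here — the whole argument is the routine density‑point transference — so the ``hard part'' is really structural bookkeeping: confirming that $\beta\in[2,\tfrac d{d-2s})$ together with $d\ge4$ forces $s>s_d(2\beta)$ and $q=2\beta\in[4,\infty)$ so that Theorem~\ref{t:max main4} truly applies, and correctly handling the sign‑indefinite difference $\gamma_0-\tilde\gamma_0$, which is why one must pass through its eigenvalue sequence $(|\nu_k|)_k$ rather than work with the flows directly. The only genuinely technical point, handled exactly as in \cite{BLN_Selecta,BKS_TLMS}, is measurability of $\mathcal O$ and the reduction of the $\limsup$ to a countable dense set of times, which follows from continuity of $t\mapsto e^{it\sqrt{-\Delta}}f(x)$ for $f$ with $\widehat f$ compactly supported combined with the maximal estimate.
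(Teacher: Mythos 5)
Your proposal is correct and follows the same route the paper intends: Theorem~\ref{t:max main4} for $q=2\beta$ combined with the standard density/oscillation argument of \cite{BLN_Selecta,BKS_TLMS}, and your numerology check (for $d\ge4$, $\beta\ge2\ge\frac{d+1}{d-1}$ forces $s_d(2\beta)=\frac d2-\frac d{2\beta}$, so $\beta<\frac{d}{d-2s}$ is exactly $s>s_d(2\beta)$) together with the eigenvalue-sequence bookkeeping for the sign-indefinite difference $\gamma_0-\tilde\gamma_0$ is precisely the content the paper delegates to the cited references.
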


When $d=2$,  the condition \eqref{e:s} exhibits different behaviors depending on whether $\beta\in [3, \infty]$  or   $\beta\in [1, 3)$  and the problem becomes delicate on the range $\beta\in [2,\infty]$, contrary to the case when $d\ge 4$. 
In this case, we are only able to obtain sharp results for  $\beta\in [1,2]$.

\begin{theorem}\label{t:max main2}  Let $d=2$,  $q\in [2,4]$, and  $\beta\in [1,2]$.  
Then, the estimate \eqref{e:max q-beta} 
holds for all orthonormal initial data $(f_j)_j\in H^s(\R^d)$ and any $(\lambda_j)\in \ell^\beta$, provided that \eqref{e:s} holds with strict inequality. 
\end{theorem}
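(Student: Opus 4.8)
The plan is to follow the Fourier-analytic strategy that underlies Theorem \ref{t:max main3}, but adapted to the constrained Schatten range $\beta\in[1,2]$, where duality and interpolation are especially favorable. First I would reduce matters, by the standard Littlewood--Paley decomposition and parabolic rescaling of the cone, to a single frequency-localized piece: fix $\widehat{f_j}$ supported in the annulus $\{|\xi|\sim\mu\}$ and aim to prove the estimate with a gain $\mu^{-2s}$ against $\|\lambda\|_{\ell^\beta}$, summing the dyadic pieces afterward (this summation is harmless provided the inequality in \eqref{e:s} is strict, which is exactly the hypothesis). The linearization $t=t(x)$ of the supremum turns $\sup_{t\in I}\sum_j\lambda_j|e^{it\sqrt{-\Delta}}f_j|^2$ into $\sum_j\lambda_j|e^{it(x)\sqrt{-\Delta}}f_j|^2$, so it suffices to bound the operator $W\colon L^2(\R^2)\to L^{q/2}_x(B_1)$ given by $Wg(x)=e^{it(x)\sqrt{-\Delta}}P_\mu g(x)$ in the appropriate Schatten-type norm; by the duality principle for orthonormal-system estimates (see \cite{BLN_Selecta,BKS_TLMS}), \eqref{e:max q-beta} with exponent $\beta$ is equivalent to $\|WW^*\|_{\mathcal C^{\beta'}(L^{(q/2)'}\to L^{q/2})}\lesssim \mu^{2s}$, i.e. to a bound on the kernel of $WW^*$.

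The core of the argument is then the analysis of the kernel
\[
K(x,y)=\int_{\R^2} e^{i(x-y)\cdot\xi+i(t(x)-t(y))|\xi|}\,\chi(\xi/\mu)\,\d\xi,
\]
which I would estimate using the decay of the Fourier transform of the conic measure: after decomposing $\xi$ dyadically into pieces at angular scale determined by $|t(x)-t(y)|$, one obtains pointwise bounds on $K$ of the form $\mu^{2}(1+\mu|(x-y,t(x)-t(y))|)^{-N}$ away from the light cone and the familiar $\mu^{3/2}$-type decay near it — this is exactly the ``careful analysis of the kernels of the associated operators on both the spatial and the frequency sides'' advertised in the introduction, and specialized to $d=2$ the cone is one-dimensional so the stationary-phase bookkeeping is clean. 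Since $\beta\le2$, one has $\beta'\ge2$, and it suffices to control $\|WW^*\|_{\mathcal C^{\beta'}}$ by interpolating between the Hilbert--Schmidt endpoint $\beta'=2$ (where $\mathcal C^2$ is an $L^2$ norm of the kernel, reducing to a Schur-type/Plancherel computation that gives precisely the $s_2(q)$ threshold on the diagonal $q=2\beta$) and the operator-norm endpoint $\beta'=\infty$, i.e. $\beta=1$ (where the estimate follows from the single-particle result, Theorem \ref{t:RV}\eqref{item:max single q<2}, together with the triangle inequality, giving the threshold $s>1/2$). Real or complex interpolation of the pair $(\mathcal C^{\infty},\mathcal C^2)=(\mathcal C^1\text{-dual endpoints})$ then yields the full range $\beta\in[1,2]$ with the exponent $\max\{s_2(q),s_2(2\beta)\}=\max\{s_2(q),1-1/\beta\}$, which is \eqref{e:s} for $d=2$.

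The main obstacle I anticipate is the Hilbert--Schmidt endpoint $\beta'=2$, i.e. proving $\|WW^*\|_{\mathcal C^2}\lesssim\mu^{2s_2(q)}$: this is where the genuine oscillatory content lives, and a crude pointwise kernel bound loses the gain one needs near the cone. The remedy is not to bound $|K(x,y)|$ pointwise but to exploit the $L^{(q/2)'}\to L^{q/2}$ mapping together with the curvature of the cone — concretely, one uses the $L^2$-based local smoothing / Stein--Tomas estimate for the cone in $\R^{2+1}$ (the relevant bound is classical in two spatial dimensions) to absorb the $x$-integration, while the time-linearization variable $t(x)$ is handled by freezing it and summing over the $O(1)$ relevant scales. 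A secondary technical point is that the supremum in $t$ must be dealt with \emph{before} invoking orthogonality, which forces one to work with the linearized operator $W$ throughout and to verify that all kernel bounds are uniform in the measurable function $t(\cdot)$; this is routine but must be tracked. Finally, assembling the dyadic pieces requires the strict inequality in \eqref{e:s} to produce a convergent geometric series in $\mu$, and the restriction $q\in[2,4]$ enters precisely so that the Stein--Tomas exponent for the cone in $\R^{2+1}$, namely $q_2=6$, dominates and the relevant estimate $s\ge s_2(q)$ is governed by the $\frac{d+1}{4}-\frac{d-1}{2q}$ branch for the orthonormal endpoint $q=2\beta$ with $\beta\le2$.
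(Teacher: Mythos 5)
Your high-level architecture matches the paper: reduce to a single Littlewood--Paley piece, pass to the dual Schatten formulation, prove a Hilbert--Schmidt endpoint and interpolate it against the trivial $\beta=1$ bound (which follows from the single-particle estimate and the triangle inequality). That much is correct and is exactly how Theorem~\ref{t:max main2} is extracted from Proposition~\ref{prop:222} in the paper. There is also a small but real slip: for $\beta\in[1,3)$ the quantity $s_2(2\beta)$ equals $\tfrac34-\tfrac{1}{4\beta}$, the Stein--Tomas branch of \eqref{sd_sigma}, not $1-\tfrac1\beta$; the interpolation between $\beta=1$ (giving $s>\tfrac12$) and $\beta=2$ (giving $s>\tfrac58$) does land on $\tfrac34-\tfrac1{4\beta}$, so your conclusion is right but the displayed formula is not.

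The genuine gap is the $\mathcal C^2$ endpoint itself, which is the entire technical content of the theorem, and here your description does not amount to a proof. You write that ``the remedy is not to bound $|K(x,y)|$ pointwise but to exploit\ldots the $L^2$-based local smoothing / Stein--Tomas estimate for the cone in $\R^{2+1}$\ldots to absorb the $x$-integration.'' The paper does the opposite on the first point: after the spatial dyadic decomposition $K_k=\sum_{l}K_{k,l}$ it \emph{does} bound $K_{k,l}$ pointwise by $2^{\frac{d+1}2k}2^{\frac{d-1}2l}\widetilde{\mathcal K}_{k,l}$, and the real subtlety is in the subsequent bilinear form \eqref{BBB}. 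The crucial step is Proposition~\ref{p:bilinear}: bound $\mathcal B_\delta(h,h)$ by $\delta^{1/2}\|h\|_{L^2_xL^1_t}^2$ (for $d=2$, with $\delta$-power loss relative to $d\ge3$). That estimate is proved by passing to the Fourier side, using Lemma~\ref{l:dispersive2} to localize $\widehat{\mathcal K_\delta^N}$ to the truncated dual cone of thickness $O(1)$ and height $\delta^{-1}$, performing a dyadic decomposition in $|\xi|$, invoking the cone geometry to transfer the $\xi$-localization to a $\tau$-localization, and then applying Bernstein in $t$ to move from $L^2_xL^2_t$ to $L^2_xL^1_t$. None of this is a cone Stein--Tomas or local smoothing estimate: the target norm is the mixed $L^2_xL^1_t$ (i.e.\ $W\in L^4_xL^2_t$ after duality with $q=4$, $r=\infty$), not an $L^p(\R^{2+1})$ space, and it is precisely the mismatch between the $|\xi|^{-1/2}$ decay of the conic FT and the $2^{k/2}$ Bernstein gain that produces the borderline exponent $\delta^{1/2}$ for $d=2$. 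If you try to substitute a black-box Stein--Tomas bound, the bookkeeping does not close: you have no mechanism producing the specific power $\delta^{1/2}$ against $\|h\|_{L^2_xL^1_t}^2$, and it is this exact exponent, combined with the scaling $l\to 2^{-l}$ and the sum over $0\le l\le k$, that yields $2^{\frac52k}2^{-\frac l2}$ in \eqref{final00} and hence the sharp $s>\tfrac58$ threshold. So the interpolation scaffolding is sound, but the central bilinear estimate is asserted rather than proved, and the proposed replacement mechanism does not obviously supply it.
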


 Note that $s_2(2\beta)=\frac5{8}$ if $\beta=2$.  The following is a consequence of Theorem \ref{t:max main2}.

\begin{corollary}\label{t:pw main2}
Let $d=2$ and $s\in [\frac12,\frac 58]$. If $\gamma_0\in \mathcal C^{\beta,s}$ is self-adjoint  and $\beta \in [1,\frac{1}{3-4s})$,   
     then the pointwise convergence \eqref{e:pw density} holds.
\end{corollary}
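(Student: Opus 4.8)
The plan is to deduce Corollary~\ref{t:pw main2} from the maximal estimate in Theorem~\ref{t:max main2}, via the by-now standard passage from a maximal bound to pointwise convergence, adapted to the density--operator framework exactly as in \cite{BLN_Selecta,BKS_TLMS}. The first step is purely bookkeeping: I would check that, for $s\in[\tfrac12,\tfrac58]$, the hypothesis $\beta\in[1,\tfrac1{3-4s})$ is equivalent to $s>s_2(2\beta)$ together with $\beta<2$. Indeed, by \eqref{sd_sigma}, $s_2(2\beta)=\max\{1-\tfrac1\beta,\ \tfrac34-\tfrac1{4\beta}\}$, and $\tfrac34-\tfrac1{4\beta}\ge 1-\tfrac1\beta$ precisely when $\beta\le 3$; hence for $\beta\le 2$ we have $s_2(2\beta)=\tfrac34-\tfrac1{4\beta}$, and $s>\tfrac34-\tfrac1{4\beta}$ rearranges to $\beta<\tfrac1{3-4s}$ (here $3-4s>0$ since $s<\tfrac34$). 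Moreover $s\le\tfrac58$ forces $\tfrac1{3-4s}\le 2$, so $\beta<2$ automatically. Thus $q:=2\beta\in[2,4)$, and $s>\max\{s_2(q),s_2(2\beta)\}=s_2(2\beta)$, so the hypotheses of Theorem~\ref{t:max main2} are met.

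Next I would upgrade the orthonormal-system estimate \eqref{e:max q-beta} to an operator-theoretic bound on the density. Given a self-adjoint $\gamma_0\in\C^{\beta,s}$, diagonalize $\langle-\Delta\rangle^{s}\gamma_0\langle-\Delta\rangle^{s}=\sum_j\mu_j\Pi_{g_j}$ with $(g_j)$ orthonormal in $L^2$ and $\|(\mu_j)\|_{\ell^\beta}=\|\gamma_0\|_{\C^{\beta,s}}$; then $f_j:=\langle-\Delta\rangle^{-s}g_j$ form an orthonormal system in $H^s(\R^2)$ with $\gamma_0=\sum_j\mu_j\Pi_{f_j}$, so that $\rho_{\gamma(t)}(x)=\sum_j\mu_j\,|e^{it\sqrt{-\Delta}}f_j(x)|^2$. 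Since $|\rho_{\gamma(t)}|\le\sum_j|\mu_j|\,|e^{it\sqrt{-\Delta}}f_j|^2$ and $q/2=\beta$, Theorem~\ref{t:max main2} applied with $q=2\beta$ gives
\[
\Big\|\sup_{t\in I}\big|\rho_{\gamma(t)}\big|\Big\|_{L^{\beta}(B_1)}\lesssim\|\gamma_0\|_{\C^{\beta,s}}.
\]
Since $s>s_2(2\beta)\ge 1-\tfrac1\beta=\tfrac d2-\tfrac{d}{2\beta}$, the density $\rho_{\gamma_0}$ is well defined a.e., so the quantities above make sense.

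Finally I would run the approximation argument. On the dense subspace $\mathcal D\subset\C^{\beta,s}$ of finite-rank self-adjoint operators $\sum_{j=1}^N\lambda_j\Pi_{f_j}$ with each $f_j$ Schwartz and $\widehat{f_j}$ compactly supported, $e^{it\sqrt{-\Delta}}f_j$ is smooth in $(x,t)$ and converges locally uniformly to $f_j$ as $t\to0$, hence $\rho_{\gamma(t)}\to\rho_{\gamma_0}$ pointwise and \eqref{e:pw density} holds on $\mathcal D$. For general self-adjoint $\gamma_0\in\C^{\beta,s}$ as in the statement, set $\mathcal O\gamma_0(x):=\limsup_{t\to0}\big|\rho_{\gamma(t)}(x)-\rho_{\gamma_0}(x)\big|$. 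Given $\e>0$, pick $h\in\mathcal D$ with $\|\gamma_0-h\|_{\C^{\beta,s}}<\e$; using the linearity of $\gamma\mapsto\rho_\gamma$, $\mathcal O h\equiv 0$, and $0\in I$, one gets the pointwise bound $\mathcal O\gamma_0\le\mathcal O(\gamma_0-h)\le 2\sup_{t\in I}|\rho_{(\gamma_0-h)(t)}|$, whence the displayed estimate (applied to $\gamma_0-h$) yields $\|\mathcal O\gamma_0\|_{L^\beta(B_1)}\lesssim\e$. Letting $\e\to0$ gives $\mathcal O\gamma_0=0$ a.e. on $B_1$, i.e.\ $\rho_{\gamma(t)}(x)\to\rho_{\gamma_0}(x)$ for a.e.\ $x\in B_1$; as $B_1$ can be replaced by any ball (the propagator and the $\C^{\beta,s}$-norm commute with translations), \eqref{e:pw density} holds a.e.\ on $\R^2$.

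The bookkeeping in the first step and the approximation in the last step are routine; the only point requiring genuine care is the second step, namely the diagonalization (including the reduction to nonnegative coefficients) that converts the orthonormal-system estimate \eqref{e:max q-beta} with $q=2\beta$ into the operator bound on $\sup_{t\in I}|\rho_{\gamma(t)}|$. This is precisely the duality/diagonalization mechanism already employed in \cite{BLN_Selecta,BKS_TLMS}, so no new difficulty arises there; the entire substantive content of the corollary is contained in Theorem~\ref{t:max main2}.
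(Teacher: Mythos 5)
Your proposal is correct and follows essentially the same route the paper intends: the paper gives no written proof of Corollary~\ref{t:pw main2}, merely noting that it "similarly" follows from Theorem~\ref{t:max main2} by the argument of \cite{BLN_Selecta,BKS_TLMS}, and you have reconstructed exactly that argument (the bookkeeping identifying $\beta<\tfrac1{3-4s}$ with $s>s_2(2\beta)$ for $\beta\le 2$, the eigenvalue decomposition of $\langle-\Delta\rangle^{s}\gamma_0\langle-\Delta\rangle^{s}$ reducing the density bound to the orthonormal maximal estimate \eqref{e:max q-beta} with $q=2\beta$, and the Stein-type approximation/translation argument). No gaps.
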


\begin{figure}
\centering
\begin{tikzpicture}[scale=6]

\path [name path=conjecture1](0,1)--(1,0);
\path [name path=conjecture2](0,3/4)--(1,1/2);
\path [name path=middle](1/2,0)--(1/2,1);

\path[name intersections={of=conjecture1 and conjecture2, by={A}}];
\path[name intersections={of=middle and conjecture2, by={C}}];

\fill [violet,opacity=0.4](0,1.05)--(0,1)--(C)--(1,1/2)--(1,1.05);

\fill [teal,opacity=0.3](0,1)--(A)--(1,1/2)--(1,0)--(0,0);

\draw [->](0,0)--(1.1,0);





\draw [very thick, white](1,1/2)--(A); 
\draw [very thick, white](0,1)--(C);
\draw [very thick, white](0,1)--(A);

\draw [->](0,0)--(0,1.1);

\draw [dotted] (0,3/4)--(1,1/2);
\draw [dotted](0,1)--(1,0);

\draw (1/2,-0.01)--(1/2,0.03);
\node [below]at (1/2,0) {$\frac12$};

\draw (A|-,-0.01)--(A|-,0.03);
\node [below] at (A|-,0) {$\frac13$};

\draw (-0.01, 2/3)--(0.03,2/3);
\node [left] at (0, 2/3+0.01) {$2/3$};

\draw (-0.01, 5/8)--(0.03,5/8);
\node [left] at (0, 5/8-0.01) {$5/8$};

\node [left]at (0,3/4) {$3/4$};

\node [below]at (1.1,0) {$\frac1\beta$};
\node [left]at (0,1.1) {$s$};

\node [below]at (1,0) {$1$};
\node [right]at (1,1/2) {$\frac12$};

\node [left]at (0,1) {$1$};

\node [below left] at (0,0) {$O$};


\end{tikzpicture}
\caption{
This figure illustrates the conditions in Theorem \ref{t:max main2} and Corollary~\ref{t:pw main2}. 
In particular, the maximal estimate \eqref{e:max q-beta} holds with $q=2\beta$ if $
s>\max\{s_d(2\beta),~1-\frac{3}{4\beta}\}$
 (the purple region), and fails if $s<s_d(2\beta)$ (the green region).
 Therefore, the condition is sharp for $q/2,\beta\in(1,2)$, up to critical line, while it remains open whether $s>s_2(2\beta)$ is also sufficient for $\beta\in(2,\infty)$.
}
\label{f:2dim}
\end{figure}
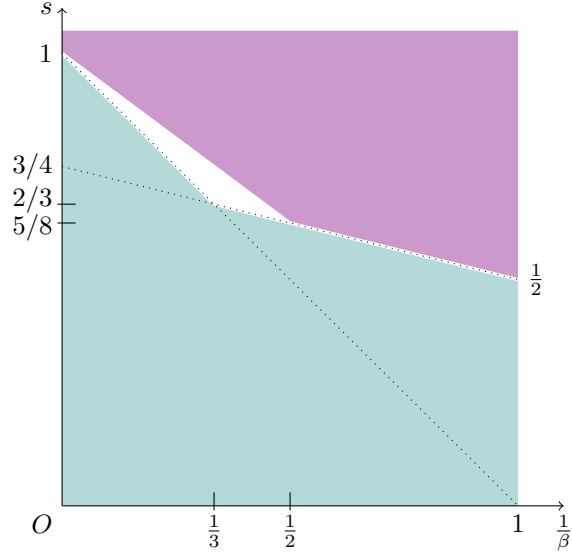

Even though the theorems are stated separately, they are essentially consequences of the sharp estimate at $\beta = 2$ (see Proposition \ref{prop:222} below). A change of regime occurs in the definition of $s_d(2\beta)$ depending on whether $\beta > \beta_\ast$ or $\beta \le \beta_\ast$, where
 \[\beta_\ast:=\frac{d+1}{d-1}.\] 
When $d = 3$, the sharp results in $\mathbb{R}^3$ are possible because $\beta_\ast = 2$. However, for $d \neq 3$, there remain regions, corresponding to the white areas in Figures \ref{f:4dim} and \ref{f:2dim}, where sharp estimates are not achieved. The non-sharp estimates for $\beta \in [1,2]$ when $d \ge 4$ (Figure \ref{f:4dim}), and for $\beta \in [2,\infty]$ when $d = 2$ (Figure \ref{f:2dim}), follow simply by interpolation (see Section \ref{s:3-1}).

\subsection{{Remarks on the orthonormal Strichartz estimates}}
We now consider the estimate
\begin{equation}
\label{e:onfs}
\Big\|\sum_j \lambda_j |e^{it(-\Delta)^{\frac a2}} f_j |^2\Big\|_{L_t^\frac q2(\R)L_x^\frac r2(\R^d)}
\lesssim
\|\lambda\|_{\ell^\beta}
\end{equation} 
for all orthonormal families of initial data $(f_j)_j\in \dot{H^s}(\R^d)$, where $s=\frac d2-\frac dr-\frac{a}q$.  The estimate, which is closely related to 
the maximal estimate we discuss in this paper, is sometimes referred to as the orthonormal Strichartz estimates.
The study of Strichartz estimates for orthonormal system of the Schr\"odinger operator was originally motivated by understanding fermionic dynamics and has since been extensively developed by many authors.
Such a study was initiated by Frank--Lewin--Lieb--Serringer \cite{FLLS} and Frank--Sabin \cite{FS_AJM}. It was extended to general  settings by Bez--Hong--Lee--Nakamura--Sawano \cite{BHLNS} and  Bez--Lee--Nakamura \cite{BLN_Forum} to other dispersive operators. Some of the endpoint cases were studied in \cite{BKS_TLMS, BKS_RIMS}. For related problems and further developments,  we refer the reader to \cite{CHP17,CHP18, FS_Survey, Ho23, Ho24, YLYL24} and references therein.

\subsection*{Notation}  
Let  $\phi\in C_c^\infty(\R)$ be a nonnegative function supported in $(2^{-1}, 2)$ such that 
\[ \sum_{k=-\infty}^\infty  \phi_{2^k} (t)=1, \quad t>0, \]
where $\phi_a=\phi(a^{-1}\cdot)$ for $a>0$. Also, we denote  $\phi^\circ=\sum_{k=-\infty}^0 \phi_{2^k}$,  so 
$ \phi_{2^k}^\circ= \sum_{j=-\infty}^k   \phi_{2^j}.$

\section{Preliminaries}
Before starting to prove Theorem~\ref{t:max main3}, \ref{t:max main4} and \ref{t:max main2}, we recall several useful lemmas, some of which are taken from earlier works. 
As noted in those works, it is often more convenient to work with the dual formulation of the estimate \eqref{e:max q-beta}.

\begin{proposition}[Duality principle, \cite{{FS_AJM}, {BHLNS}}] 
    Let $q,r\geq2$, $\beta\geq1$. Suppose that $Tf$ is a bounded operator from $L^{q}_xL^{r}_t$ to $L^2_x$.  Then the following are equivalent.
    \begin{enumerate}[(i)]
        \item The estimate 
        \[
        \Big\| \sum_j \lambda_j |Tf_j|^2 \Big\|_{L^{\frac q2}_xL^{\frac r2}_t}
        \lesssim
        \|\lambda\|_{\ell^\beta}
        \]
        holds for all orthonormal systems $(f_j)_j\subset L^2(\R^d)$ and all sequences $(\lambda_j)_j\in \ell^\beta(\mathbb C)$.
        \item The estimate 
        \[
        \|WTT^*\overline{W}\|_{\C^{\beta'}}
        \lesssim
        \|W\|_{L^{\tilde{q}}_xL^{\tilde{r}}_t}^2
        \]
        holds for all $W\in L^{\tilde{q}}_xL^{\tilde{r}}_t$.
    \end{enumerate}
    Here, $p'$ denotes the usual H\"older conjugate of $p$, and $\tilde{p}$ denotes the half H\"older conjugate of $p$  given by the relation
    \[
    \frac1p + \frac{1}{\tilde{p}}=\frac12.
    \]
\end{proposition}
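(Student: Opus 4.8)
The plan is to use the standard ``dualize twice'' argument of Frank--Sabin \cite{FS_AJM}, adapted to the mixed spacetime norm as in \cite{BHLNS}. Throughout, $M_W$ denotes the operator of multiplication by a function $W=W(x,t)$ on $\R^{1+d}$, so $M_W^*=M_{\overline W}$; the boundedness hypothesis on $T$ is what makes the operators below well defined. By the triangle inequality and $\|\,|\lambda|\,\|_{\ell^\beta}=\|\lambda\|_{\ell^\beta}$ one may assume $\lambda_j\ge 0$, so that the function $\sum_j\lambda_j|Tf_j|^2$ is nonnegative.

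First I would dualize the outer norm in (i). Since $q,r\ge 2$, the space $L^{q/2}_xL^{r/2}_t$ is a Banach space with dual $L^{(q/2)'}_xL^{(r/2)'}_t$, and a short computation from $\frac1p+\frac1{\tilde p}=\frac12$ gives $(q/2)'=\tilde q/2$ and $(r/2)'=\tilde r/2$. Hence (i) is equivalent to the bilinear bound $\big|\int (\sum_j\lambda_j|Tf_j|^2)\,V\big|\lesssim\|\lambda\|_{\ell^\beta}\|V\|_{L^{\tilde q/2}_xL^{\tilde r/2}_t}$ for all nonnegative $V$; writing such a $V$ as $|W|^2$, one checks $\|W\|_{L^{\tilde q}_xL^{\tilde r}_t}^2=\|V\|_{L^{\tilde q/2}_xL^{\tilde r/2}_t}$, and expanding the square together with the orthonormality of $(f_j)_j$ gives
\[
\int\Big(\sum_j\lambda_j|Tf_j|^2\Big)|W|^2=\sum_j\lambda_j\,\big\|M_WTf_j\big\|_{L^2}^2=\operatorname{tr}\!\big((M_WT)^*(M_WT)\,A\big),\qquad A:=\sum_j\lambda_j\Pi_{f_j},
\]
where $A$ is a compact, positive, self-adjoint operator on $L^2(\R^d)$ with $\|A\|_{\mathcal C^\beta}=\|\lambda\|_{\ell^\beta}$. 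Next I would dualize in the Schatten pairing $(\mathcal C^\beta)^*=\mathcal C^{\beta'}$: by the spectral theorem, taking the supremum over all such $A$ with $\|A\|_{\mathcal C^\beta}\le1$ saturates $\|(M_WT)^*(M_WT)\|_{\mathcal C^{\beta'}}$, which by $\|X^*X\|_{\mathcal C^{\beta'}}=\|XX^*\|_{\mathcal C^{\beta'}}$ (same singular values) applied to $X=M_WT$ equals $\|M_WTT^*M_{\overline W}\|_{\mathcal C^{\beta'}}=\|WTT^*\overline W\|_{\mathcal C^{\beta'}}$. Reading this chain of identities in both directions yields the equivalence of (i) and (ii), with the saturation step used for (i)$\Rightarrow$(ii) and the H\"older inequality $|\operatorname{tr}(AB)|\le\|A\|_{\mathcal C^\beta}\|B\|_{\mathcal C^{\beta'}}$ for (ii)$\Rightarrow$(i).

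The main point requiring care is not the algebra but two technical matters. First, one must know that restricting $A$ to the special form $\sum_j\lambda_j\Pi_{f_j}$ with $(f_j)_j$ orthonormal still saturates the $\mathcal C^{\beta'}$-norm of the positive operator $(M_WT)^*(M_WT)$; this follows by diagonalizing that operator via the spectral theorem and choosing $A$ diagonal in its eigenbasis with nonnegative eigenvalues, which incidentally is also what legitimizes the reduction to $V\ge 0$ above. Second, the trace manipulations must be justified: I would run the computation first for bounded, compactly supported $W$, for which the boundedness of $T$ immediately places $M_WT$ in the relevant Schatten class and makes all traces absolutely convergent, and then remove this restriction by a density and monotone-limit argument. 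The endpoint $\beta=1$ (so $\beta'=\infty$) is included by interpreting $\mathcal C^\infty$ as the bounded operators and using $|\operatorname{tr}(BA)|\le\|B\|_{\mathrm{op}}\|A\|_{\mathcal C^1}$, the needed operator-norm bound on $WTT^*\overline W$ again coming directly from the hypothesis on $T$. Since the statement is quoted from \cite{FS_AJM,BHLNS}, in the paper one may simply cite it, but this is the argument I would reconstruct.
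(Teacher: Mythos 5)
The paper does not prove this proposition; it is quoted directly from \cite{FS_AJM, BHLNS}, so there is no in-paper argument to compare against. Your reconstruction is the standard two-step duality argument from those references and is correct: dualizing the mixed Lebesgue norm against $V=|W|^2$, rewriting the pairing as a trace against $A=\sum_j\lambda_j\Pi_{f_j}$, invoking Schatten duality $(\mathcal C^\beta)^*=\mathcal C^{\beta'}$, and passing from $X^*X$ to $XX^*$, with the saturation by positive diagonal $A$ (spectral theorem) handling (i)$\Rightarrow$(ii) and the trace--H\"older inequality handling the converse; your remarks on the $\beta=1$ endpoint and on first working with bounded compactly supported $W$ to justify the trace identities are exactly the right technical caveats.
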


\subsection*{Fourier transform of function supported near the cone}
Let  $\tilde \phi \in C_c^\infty((2^{-2}, 2^2))$.
For $\delta\in [0,1)$ and $N\ge1$, let 
\[   \mathcal K_\delta^N (x,t)=  \Big(1+\frac{||x|-|t||^2}{\delta^2}\Big)^{-N} \tilde\phi(|x|). \]  
The following is the key estimate on which our argument relies.

\begin{lemma}
\label{l:dispersive2}  
For any $M\ge 1$, there is a constant $C=C_{M,N}>0$ such that
\[
\big| \widehat{\mathcal K_\delta^N}(\xi,\tau)| \le C \delta (1+\delta|\tau|)^{-M} (1+|\xi|)^{-\frac{d-1}2} \sum_{\pm}(1+||\xi|\pm \tau|)^{-M}.
\]
\end{lemma}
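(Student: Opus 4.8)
The plan is to compute $\widehat{\mathcal K_\delta^N}(\xi,\tau)$ by exploiting the rotational structure in $x$ and the one-dimensional nature of the weight in the radial variable $r=|x|$. Writing $\widehat{\mathcal K_\delta^N}(\xi,\tau) = \int_{\R}\int_{\R^d} e^{-i(x\cdot\xi + t\tau)}\bigl(1+\delta^{-2}(|x|-|t|)^2\bigr)^{-N}\tilde\phi(|x|)\,\d x\,\d t$, I would first perform the angular integration over the sphere of radius $r=|x|$. This produces a Bessel-type factor: $\int_{\S^{d-1}} e^{-ir\omega\cdot\xi}\,\d\sigma(\omega) = c_d\, (r|\xi|)^{-\frac{d-2}{2}} J_{\frac{d-2}{2}}(r|\xi|)$. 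Using the standard asymptotics of Bessel functions — $J_\nu(\rho) = \rho^{-1/2}\bigl(a_+(\rho)e^{i\rho} + a_-(\rho)e^{-i\rho}\bigr)$ with symbols $a_\pm$ satisfying $|\partial_\rho^k a_\pm(\rho)| \lesssim (1+\rho)^{-k}$, plus the trivial bound near $\rho=0$ — I would reduce $\widehat{\mathcal K_\delta^N}(\xi,\tau)$ to a sum over $\pm$ of oscillatory integrals in $(r,t)$ with phase $\mp r|\xi| - t\tau$, amplitude $r^{-\frac{d-1}{2}}|\xi|^{-\frac{d-1}{2}}a_\pm(r|\xi|)\tilde\phi(r)$ (on the support $r\sim1$, so powers of $r$ are harmless), against the weight $\bigl(1+\delta^{-2}(r-|t|)^2\bigr)^{-N}$.

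Next I would change variables to decouple the weight: on $t>0$ set $u = r-t$, so the weight becomes $(1+\delta^{-2}u^2)^{-N}$ and the phase becomes $\mp r|\xi| - (r-u)\tau = -r(\pm|\xi| + \tau) + u\tau$. The $u$-integral $\int (1+\delta^{-2}u^2)^{-N} e^{iu\tau}\,\d u$ is exactly (a rescaling of) the Fourier transform of a Schwartz-type bump; it equals $\delta$ times a function of $\delta\tau$ that decays like $(1+\delta|\tau|)^{-M}$ for every $M$ (for $N$ large enough depending on $M$; since $\mathcal K_\delta^N$ only needs $N\ge1$ one can first note the elementary bound, or absorb the loss — the statement allows the constant to depend on $N$, and one can always take $N$ as large as needed since increasing $N$ only shrinks the function pointwise... more carefully, one integrates by parts $M$ times in $u$, which is legitimate once $2N > M+1$; for smaller $N$ the claimed decay in $\tau$ still follows by interpolating/using that $(1+\delta^{-2}u^2)^{-N}$ has an integrable derivative after one integration by parts, giving at least the stated form after adjusting constants — I will simply take $N$ large, which is the intended reading). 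This yields the factor $\delta(1+\delta|\tau|)^{-M}$. The remaining $r$-integral is $\int a_\pm(r|\xi|)\tilde\phi(r) e^{-ir(\pm|\xi|+\tau)}\,\d r$ over $r\sim 1$; repeated integration by parts in $r$ — using $|\partial_r^k[a_\pm(r|\xi|)\tilde\phi(r)]| \lesssim_k 1$ uniformly in $|\xi|$ because each derivative hitting $a_\pm$ gains $|\xi|$ but $a_\pm$'s derivative loses $(1+r|\xi|)^{-1}\lesssim |\xi|^{-1}$ — gives the decay $(1+|\pm|\xi|+\tau|)^{-M}$. The $t<0$ contribution is handled identically with $u=r+t$, producing the other sign in $\pm|\xi| + \tau$, and together the two signs and the two Bessel branches assemble into $\sum_{\pm}(1+||\xi|\pm\tau|)^{-M}$. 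Collecting the $|\xi|^{-\frac{d-1}{2}}$ from the Bessel amplitude completes the bound.

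The main obstacle — and the point requiring genuine care rather than bookkeeping — is establishing the uniform-in-$|\xi|$ symbol bounds so that integration by parts in $r$ costs nothing: one must track that differentiating the Bessel amplitude $a_\pm(r|\xi|)$ is compensated by its own decay, and treat the small-frequency region $|\xi|\lesssim 1$ separately (where there is no oscillation to exploit and one simply uses $|\widehat{\mathcal K_\delta^N}(\xi,\tau)| \lesssim \|\mathcal K_\delta^N\|_{L^1} \lesssim \delta$ together with the $\tau$-decay from the $u$-integration, and $(1+|\xi|)^{-\frac{d-1}{2}}\sim 1$, $(1+||\xi|\pm\tau|)^{-M}$ controlled by $(1+\delta|\tau|)^{-M}$ up to the loss of some powers which can be redistributed). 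A secondary point is being honest about how large $N$ needs to be relative to $M$ for the $u$-integration by parts; since the constant is allowed to depend on both $M$ and $N$, and making $N$ larger only makes $\mathcal K_\delta^N$ smaller pointwise (so bounding it for large $N$ suffices after a trivial comparison when one really needs small $N$, or one just states the lemma as used with $N$ large), this is not a serious difficulty but should be stated cleanly.
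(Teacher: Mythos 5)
Your proposal is correct and follows essentially the same route as the paper: decouple the weight from the radial variable by a change of variables in $t$ (the paper substitutes $t\to|x|-t$ before spherical integration; you substitute $u=r-t$ after), read off the $\delta(1+\delta|\tau|)^{-M}$ factor from the one-dimensional Fourier transform of $(1+\delta^{-2}u^2)^{-N}$, and extract the $(1+|\xi|)^{-\frac{d-1}{2}}$ decay and the phases $e^{\pm ir|\xi|}$ from the asymptotics of $\widehat{d\sigma}$ (equivalently, Bessel asymptotics), finishing with integration by parts in $r$. The minor concerns you flag — small $|\xi|$ and the dependence of admissible $M$ on $N$ — are real but harmless, exactly as you say (and in fact $\widehat{\varphi^N}$ decays exponentially for every $N\ge1$ by contour shifting, so the $N$-versus-$M$ tension never materializes).
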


\begin{proof}
We may assume that $t>0$, since the case $t<0$ can be treated similarly.
Let $\varphi^{N}(s) := (1+s^2)^{-N}$. Then the Fourier transform is written as
\begin{align*}
\widehat{\mathcal K_\delta^N}(\xi,\tau)
&=\iint e^{-i(x,t)\cdot (\xi,\tau)} \varphi^{N}\Big(\frac{|x|-t}{\delta}\Big) \tilde \phi(|x|)\,\d x\d t.
\end{align*}
By the change of variables $t\rightarrow |x|-t$, we observe that
\begin{align*}
\widehat{\mathcal K_\delta^N}(\xi,\tau)&=
\int \varphi^{N}\Big(\frac t\delta\Big)e^{it\tau}\d t
\int e^{-i(x,|x|)\cdot(\xi,\tau)} \tilde \phi(|x|)\,\d x
:= \delta \widehat{\varphi^{N}}(\delta\tau) \cdot \mathcal G(\xi,\tau)
\end{align*}
By integration by parts,
$
|\delta \widehat{\varphi^{N}}(\delta\tau)|
\le C_{M,N} \delta (1+\delta|\tau|)^{-M}
$
for sufficiently large $M\ge1$.
Thus it suffices to prove
\begin{align}\label{decaying}
|\mathcal G(\xi,\tau)| \lesssim \sum_{\pm} (1+|\xi|)^{-\frac{d-1}2} (1+||\xi|\pm \tau|)^{-M}
\end{align}
for any $M\ge 1$, which yields the desired decay estimates.

Using the spherical coordinates, we write
\begin{align*}
\mathcal G(\xi,\tau)
&= \iint   e^{-i(r\omega, r)\cdot(\xi,\tau)}\,\tilde\phi(r)r^{d-1}\,\d\sigma(\omega)\, \d r
\\ 
& =   \int   \widehat{d\sigma}(r\xi) e^{-i r\tau}\,\tilde\phi(r)r^{d-1}\, \d r.
\end{align*}
By the well-known asymptotic behavior of the Bessel functions, we have 
\[\widehat{d\sigma}(r\xi)=\sum_\pm e^{\pm ir|\xi|} |\xi|^{-\frac{d-1}2}c_\pm(r\xi)\]
for appropriate smooth functions $c_{\pm}$ satisfying $|\partial_\xi^\alpha c_\pm(\xi)|\lesssim |\xi|^{-|\alpha|}$ for any $\alpha$.
Combining this and the above yields 
\begin{align*}
\mathcal G(\xi,\tau)=\sum_{\pm}  |\xi|^{-\frac{d-1}2} \int e^{-i r(\tau \mp |\xi|)}c_\pm(r\xi) \tilde\phi(r)r^{d-1}\,\d r. 
\end{align*}
By repeated integration by parts in $r$, the inner integral is bounded by $(1+|\tau\pm|\xi||)^{-N}$ for any $N\ge1$, as desired.
\end{proof}

\subsection*{A  bilinear estimate}

The following bilinear estimate associated with the thickened cone plays a crucial role in the proof of Theorems \ref{t:max main3}, \ref{t:max main4} and    \ref{t:max main2}. 
We set 
\[
\theta_d=\begin{cases}
\frac 12, \quad \text{if} &\quad d=2,\\
1, \quad \text{if} &\quad d\ge3.
\end{cases}
\]
\begin{proposition}\label{p:bilinear}
Let $d\geq2$,  $0<\delta<2^{-2}$. 
Then we have  
\begin{align*}
\Big| \iiiint g_1(x,t) \overline{g_2}(x',t') &\mathcal K_\delta^N (x-x',t-t')\,\d x\d t\d x'\d t' \Big|
\lessapprox
\delta^{\theta_d}
\|g_1\|_{L_x^2L_t^1}\|g_2\|_{L_x^2L_t^1}
\end{align*}
for all $g_1, g_2 \in L_x^2L_t^1$.
\end{proposition}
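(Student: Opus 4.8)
The idea is to pass to the Fourier side, since the left-hand side of Proposition~\ref{p:bilinear} is a bilinear form with convolution kernel $\mathcal K_\delta^N$, and we have the precise decay of $\widehat{\mathcal K_\delta^N}$ from Lemma~\ref{l:dispersive2}. By Plancherel in all variables, writing $G_j(\xi,\tau)=\widehat{g_j}(\xi,\tau)$, the quantity to be bounded equals
\[
\Big| \iint G_1(\xi,\tau)\,\overline{G_2(\xi,\tau)}\,\widehat{\mathcal K_\delta^N}(\xi,\tau)\,\d\xi\d\tau\Big|,
\]
and by Lemma~\ref{l:dispersive2} it is controlled by
\[
\delta \iint |G_1(\xi,\tau)|\,|G_2(\xi,\tau)|\,(1+\delta|\tau|)^{-M}(1+|\xi|)^{-\frac{d-1}2}\sum_\pm (1+||\xi|\pm\tau|)^{-M}\,\d\xi\d\tau.
\]
So the task is reduced to an $L^2$-weighted Schur-type estimate: bound this integral by $\delta^{\theta_d-1}\|g_1\|_{L^2_xL^1_t}\|g_2\|_{L^2_xL^1_t}$. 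The key point is that $\|g_j\|_{L^2_xL^1_t}$ controls $\|G_j(\xi,\cdot)\|_{L^\infty_\tau L^2_\xi}$ — more precisely, $g_j\in L^2_xL^1_t$ means $\sup_\tau \|\widehat{g_j}(\cdot,\tau)\|_{L^2_\xi} = \sup_\tau \|\,\widehat{g_j(\cdot,t)}(\xi)\text{ integrated against }e^{-it\tau}\,\|_{L^2_\xi}\le \|\,\|g_j(\cdot,t)\|_{L^2_x}\,\|_{L^1_t}$, using Minkowski's inequality and Plancherel in $x$ only. Thus $A_j:=\|G_j\|_{L^\infty_\tau L^2_\xi}\le \|g_j\|_{L^2_xL^1_t}$.

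**Main step.** With $A_j$ as above, apply Cauchy–Schwarz in $\xi$ for fixed $\tau$ to the displayed integral:
\[
\delta\int (1+\delta|\tau|)^{-M}\Big(\int |G_1(\xi,\tau)||G_2(\xi,\tau)|\,w(\xi,\tau)\,\d\xi\Big)\d\tau
\le \delta\, A_1 A_2 \int (1+\delta|\tau|)^{-M}\,\|w(\cdot,\tau)\|_{L^\infty_\xi}\,\d\tau,
\]
where $w(\xi,\tau)=(1+|\xi|)^{-\frac{d-1}2}\sum_\pm(1+||\xi|\pm\tau|)^{-M}$. However, $\|w(\cdot,\tau)\|_{L^\infty_\xi}$ is comparable to $(1+|\tau|)^{-\frac{d-1}2}$ (the sup in $\xi$ is attained near $|\xi|=|\tau|$), which is not integrable against $(1+\delta|\tau|)^{-M}\,\d\tau$ when $d=2$ — this is exactly why $\theta_2=\tfrac12$ rather than $1$, and it is the subtlety to watch. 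The fix is not to take a crude $L^\infty_\xi$ bound but to integrate in $\xi$ honestly: for fixed $\tau$, $\int w(\xi,\tau)\,\d\xi \lesssim \int_0^\infty (1+r)^{-\frac{d-1}2}\sum_\pm(1+|r\pm\tau|)^{-M} r^{d-1}\,\d r$, which, using that the sum over $\pm$ localizes $r$ to within $O(1)$ of $|\tau|$, is $\lesssim (1+|\tau|)^{-\frac{d-1}2}(1+|\tau|)^{d-1}=(1+|\tau|)^{\frac{d-1}2}$ — still not good enough directly. The right move is therefore to split $G_j = G_j\,\one_{|\xi|\le 1} + G_j\,\one_{|\xi|> 1}$ and/or to perform the $\xi$-integration by first using Cauchy–Schwarz on the angular part only, keeping the radial variable coupled to $\tau$ through the weight $(1+|r\pm\tau|)^{-M}$. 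Concretely, with polar coordinates $\xi=r\omega$, write $G_j(r\omega,\tau)$; then
\[
\int |G_1||G_2|\,w\,\d\xi = \int_0^\infty (1+r)^{-\frac{d-1}2}\sum_\pm (1+|r\pm\tau|)^{-M} r^{d-1}\Big(\int_{S^{d-1}}|G_1(r\omega,\tau)||G_2(r\omega,\tau)|\,\d\sigma(\omega)\Big)\d r,
\]
and after Cauchy–Schwarz in $\omega$ and then in $r$ against the measure $(1+|r\pm\tau|)^{-M}\,\d r$, one lands on $\big(\sup_\tau\!\int\!|G_j|^2\big)^{1/2}$ times $\big(\int (1+r)^{-(d-1)}(1+|r\pm\tau|)^{-M} r^{d-1}\,\d r\big)^{1/2}$; this last radial integral is $O(1)$ when $d\ge 3$ (the weight $(1+r)^{-(d-1)}r^{d-1}$ is bounded) and $O(\log(2+|\tau|))$ when $d=2$. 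Feeding this into the $\tau$-integral against $\delta(1+\delta|\tau|)^{-M}$ gives $O(1)$ when $d\ge3$ and $O(\delta^{1/2})$ (after balancing the logarithmic loss and the extra $\delta$-saving one extracts near $|\tau|\lesssim\delta^{-1}$, or simply by a $\lessapprox$ absorbing the log) when $d=2$, which is precisely $\delta^{\theta_d}$ relative to the claimed normalization.

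**Obstacle.** The only real obstacle is the $d=2$ borderline: the weight $(1+|\xi|)^{-\frac{d-1}2}$ decays too slowly for the naive $\xi$-integration, so one must exploit the coupling between $r$ and $\tau$ in $\sum_\pm(1+|r\pm\tau|)^{-M}$ and accept the $\lessapprox$ (a log-loss, harmless since the statement is phrased with $\lessapprox$) rather than aiming for a clean $\delta^{1/2}$. In dimensions $d\ge3$ the argument is clean and gives the sharp $\delta^{1}$. I would organize the proof as: (1) Plancherel to move to the Fourier side; (2) invoke Lemma~\ref{l:dispersive2}; (3) record $\|G_j\|_{L^\infty_\tau L^2_\xi}\le\|g_j\|_{L^2_xL^1_t}$ via Minkowski and Plancherel-in-$x$; (4) perform the weighted $\xi$-then-$\tau$ integration with the case split $d\ge3$ vs.\ $d=2$, carrying out Cauchy–Schwarz carefully so as not to decouple $r$ from $\tau$ prematurely.
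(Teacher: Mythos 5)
Your plan is, at its core, the paper's proof in a different packaging: pass to the Fourier side via Plancherel, invoke Lemma~\ref{l:dispersive2} for the decay of $\widehat{\mathcal K_\delta^N}$, and use the control $A_j:=\sup_\tau\|\widehat g_j(\cdot,\tau)\|_{L^2_\xi}\le\|g_j\|_{L^2_xL^1_t}$ (which is correct, by taking the modulus inside the $t$-integral and then Plancherel in $x$ only). The paper reaches the same arithmetic through a dyadic decomposition of $g_j$ in $|\xi|$, using the cone geometry to localize $|\tau|\sim 2^k$, followed by Bernstein's inequality in $t$; the factor $2^k$ that your $\tau$-integration over the band $|\tau|\sim 2^k$ produces is exactly the product $2^{k/2}\cdot 2^{k/2}$ of the two Bernstein losses, so the two arguments are equivalent in substance.

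The real issue is in the middle of your write-up: the ``obstacle'' you flag in $d=2$ is illusory, and the ``fix'' you then sketch would not close the argument. Having already derived
\[
\delta\,A_1A_2\int(1+\delta|\tau|)^{-M}\,\|w(\cdot,\tau)\|_{L^\infty_\xi}\,\d\tau
\lesssim
\delta\,A_1A_2\int(1+\delta|\tau|)^{-M}(1+|\tau|)^{-\frac{d-1}{2}}\,\d\tau,
\]
you should simply evaluate the remaining $\tau$-integral: it is $O(1)$ for $d\ge4$, $O(\log(1/\delta))$ for $d=3$, and $O(\delta^{-1/2})$ for $d=2$. Multiplying by the prefactor $\delta$ gives $\delta$, $\delta\log(1/\delta)$, and $\delta^{1/2}$, respectively, which is $\lessapprox\delta^{\theta_d}$ in every case. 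The $\delta^{-1/2}$ growth of the $\tau$-integral when $d=2$ is not an obstruction; it is precisely what produces $\theta_2=\frac12$, so ``not integrable uniformly in $\delta$'' is the feature, not the bug. By contrast, your polar-coordinate detour trades the essential decay $(1+|\tau|)^{-\frac{d-1}{2}}$ for a radial bound that carries no $\tau$-decay (it is $O(1)$ or $O(\log(2+|\tau|))$); feeding that into $\delta\int(1+\delta|\tau|)^{-M}(\cdots)\,\d\tau$ gives at best $\delta\cdot\delta^{-1}\log(1/\delta)=\log(1/\delta)$, not $O(\delta^{1/2})$, so as computed the ``fix'' is both unnecessary and too weak. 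Retain the naive $L^\infty_\xi$ Cauchy--Schwarz from your main step, integrate in $\tau$, and you are done.
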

Here, $A\lessapprox B$ means that  $A\le C_\epsilon \delta^{-\epsilon}B$ for any $\epsilon>0$.

In \cite{KKS}, the authors pursued a geometric approach to the estimate, analyzing the structure of intersections between two thickened cones. Their method was confined to the low-dimensional cases $d=2,3,4$, and led to weaker bounds. For instance, in dimension $d=3$, the authors proved a version of the above estimate with $\delta$ replaced by $\delta^\frac12$.

In contrast to \cite{KKS}, we use the function $\mathcal K_\delta^N$ in place of the characteristic function of a $\delta$-neighborhood of the cone, in order to further exploit its frequency localization in the $\tau$-variable at scale $\lesssim \delta^{-1}$ as in Lemma~\ref{l:dispersive2}. 

\begin{proof}[Proof of Proposition \ref{p:bilinear}]
It  is convenient in the later argument to define
\begin{equation}
\label{BBB}
 \mathcal B_\delta( g_1, g_2)=  \iiiint g_1(x,t) \overline{g_2}(x',t') \mathcal K_\delta^N (x-x',t-t')\,\d x\d t\d x'\d t'.
\end{equation}

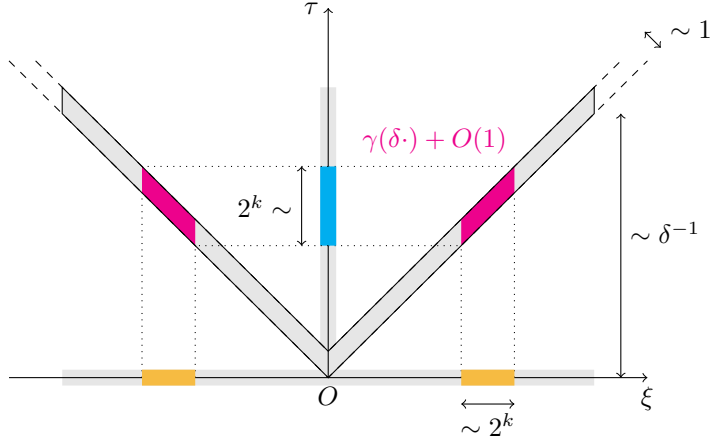
\begin{figure}
\centering
\begin{tikzpicture}[scale=0.35]
\def\x{1}

\fill [opacity=0.1] (0,0)--(10,10)--(10,10+\x)--(0,\x)--(-10,10+\x)--(-10,10)--(0,0);

\draw [line width=6pt, opacity=1, color=white] (-10,0)--(10,0);
\draw [line width=6pt, opacity=0.1] (-10,0)--(10,0);

\draw [line width=6pt, opacity=1, color=white] (0,0)--(0,11);
\draw [line width=6pt, opacity=0.1] (0,0)--(0,11);

\draw [->] (-12,0)--(12,0);
\draw [->] (0,0)--(0,14);

\fill [magenta] (5,5)--(7,7)--(7,7+\x)--(5,5+\x)--(5,5);
\fill [magenta] (-5,5)--(-7,7)--(-7,7+\x)--(-5,5+\x)--(-5,5);

\draw [dashed] (-12,12)--(0,0)--(12,12);
\draw [dashed, shift={(0,\x)}] (-11,11)--(0,0)--(11,11);
\draw (-10,10)--(0,0)--(10,10)--(10,10+\x)--(0,\x)--(-10,10+\x)--cycle;


\draw [dotted] (5,0)--(5,5);
\draw [dotted] (7,0)--(7,7);
\draw [line width=6pt, color=banana, opacity=1] (5,0)--(7,0);
\draw [dotted] (-5,0)--(-5,5);
\draw [dotted] (-7,0)--(-7,7);
\draw [line width=6pt, color=banana, opacity=1] (-5,0)--(-7,0);

\draw [dotted] (0,5)--(5,5);
\draw [dotted] (0,7+\x)--(7,7+\x);
\draw [line width=6pt, cyan, opacity=1] (0,5)--(0,7+\x);
\draw [dotted] (0,5)--(-5,5);
\draw [dotted] (0,7+\x)--(-7,7+\x);

\draw [<->, shift={(12.5,12.5)}] (0,0)--(-0.6,0.6);
\node [above right]at (12.5,12.5) {$\sim 1$};

\draw [<->, shift={(5,-1)}] (0,0)--(2,0);
\node [below]at (6,-1) {$\sim 2^k$};

\draw [<->, shift={(-1,5)}] (0,0)--(0,3);
\node [left]at (-1,6.5) {$2^k \sim$};

\draw [<->, shift={(11,0)}] (0,0)--(0,10);
\node [right]at (11,5.5) {$\sim \delta^{-1}$};

\node [below] at (0,0) {$O$};
\node [below] at (12,0) {$\xi$};
\node [left] at (0,14) {$\tau$};

\node at (4,9) {$\textcolor{magenta}{\gamma(\delta\cdot)+O(1)}$};

\end{tikzpicture}
\caption{
The (truncated) dual cone in the frequency side. The localization in $\tau$ carries over to $\xi$ due to the geometry of the cone.
}
\label{f:cone}
\end{figure}

By Plancherel's theorem,
\begin{equation}
\label{e:ft}
\mathcal B_\delta(g_1,g_2)= \iint \widehat{g}_1(\xi,\tau) \overline{{\widehat{g}}_2}(\xi,\tau)\widehat{\mathcal K_\delta^N}(\xi,\tau)\,\d\xi\d\tau.
\end{equation}
It follows from Lemma~\ref{l:dispersive2} that $\widehat{\mathcal K_\delta^N}$ 
is essentially supported on the cone $\{(\xi,\tau): |\tau|\lesssim \delta^{-1}, ~||\xi|\pm|\tau||\sim 1\}$, which has thickness approximately $1$.  
This is somewhat dual to the small, thin cone that appeared earlier on the spatial side in the kernel estimate. 
Due to the cone structure, the given localization in $\tau$ is reflected in $\xi$. 

Let $L$ be the integer such that $2^{L-4}< \delta^{-1} \le 2^{L-3}$. 
For $j=1,2$ and $k\ge 0$, 
define $g_j^k$ by 
\[
\widehat{g_j^k} (\xi,\tau)=
\begin{cases} 
\widehat{g}_j(\xi,\tau)\phi^\circ(|\xi|),   & \text{ if }  k=0,
\\[3pt]
\widehat{g}_j(\xi,\tau)\phi_{2^k}(|\xi|),  & \text{ if }  k>0,
\end{cases}
\]
so that we have
\[
{g}_j=\sum_{k\ge 0} {g}_j^k, \quad j=1,2. 
\]

Now, using the properties of the cutoff functions $\phi_{2^k}$, $\phi^\circ$, and the rapid decay of $\widehat{\mathcal K_\delta^N}$ away from the light cone (Lemma~\ref{l:dispersive2}), 
we have 
\begin{align*}
|\mathcal B_\delta(g_1,g_2)|
\lesssim \delta\Big(  
\sum_{0\leq k\le L} &2^{-\frac{d-1}2k}\iint  \Big| \widehat{g_1^k}(\xi,\tau) \Big|  \Big|  \widehat{g_2^k}(\xi,\tau) \Big| \,\d\xi\d\tau  \\ &+    \sum_{k>L}  2^{-kM}   \iint \Big| \widehat{g_1^k}(\xi,\tau) \Big|  \Big|  \widehat{g_2^k}(\xi,\tau) \Big| \,\d\xi\d\tau\Big)
\end{align*}
for large $M\ge1$. 
Note that for $k\geq 1$, the cone structure transfers the additional localization in $\xi$ to $\tau$ (see Figure~\ref{f:cone}). Consequently, $\widehat{g_j^k}$ is supported in $\{(\xi,\tau): |\xi|\sim |\tau| \sim 2^{k} \}$.
By the Cauchy--Schwarz inequality and  Plancherel's theorem, we obtain 
\[ 
|\mathcal B_\delta(g_1,g_2)|
\lesssim \delta\Big(  \sum_{0 \le k\le L} 2^{-\frac{d-1}2k} +\sum_{k>L}   2^{-kM} \Big)\|g_1^k\|_{L^2_xL^2_t}\| {g_2^k}\|_{L^2_xL^2_t}.
\] 
Now, taking into account the supports of the temporal Fourier transforms of $g_j^k$, Bernstein's inequality gives
\[
\|g_j^k \|_{L^2_xL^2_t}
\lesssim 2^{\frac k2 }\|g_j\|_{L^2_xL_t^1},  \quad j=1,2,   \]
for all $k\ge0$. 
Consequently,
\begin{align*}
|\mathcal B_\delta(g_1,g_2)|
&\lesssim \delta   \Big(\sum_{k\le L}2^{-\frac{d-3}2k}+    \sum_{k> L}   2^{-k(M-1)}\Big) \|g_1\|_{L^2_xL_t^1}\| g_2\|_{L^2_xL_t^1} \\
&\lessapprox \delta^{\theta}\|g_1\|_{L^2_xL_t^1}\| g_2\|_{L^2_xL_t^1}.
\end{align*}
where $\theta=\frac12$ when $d=2$, and $\theta=1$ when $d\ge3$.
Therefore, \eqref{BBB} follows. 
\end{proof}

\begin{remark}  The observation in the proof of Proposition \ref{p:bilinear} can be used to prove the orthonormal Strichartz estimate for the wave equation. Indeed, from \eqref{e:ft} with $2^k \le \delta^{-1}$ we essentially (up to some 
error due to the Schwartz tails) have
\[
|\mathcal B_\delta(g_1,g_2)|
\lesssim
\delta 2^{-\frac{d-3}2k}
\iint_{|\xi|,|\tau|\sim 2^k} \Big|  \widehat{g}_1(\xi,\tau)\Big| \Big|{\widehat{g}}_2(\xi,\tau)\Big|
(1+||\xi|\pm \tau|)^{-M}
\d \xi\d \tau.
\]
This may provide an alternative proof of the orthonormal Strichartz estimate \eqref{e:onfs} in the case $a = 1$ and $\beta = 2$
 (cf. the known Strichartz estimate for the wave equation in \cite[Proposition 3.1]{BCL18}). We leave the details to the interested reader.
\end{remark}

\section{Proof of Theorems \ref{t:max main3}, \ref{t:max main4}, and \ref{t:max main2}}\label{sec:4}

In this section, we prove Theorems \ref{t:max main3}, \ref{t:max main4}, and \ref{t:max main2}.  Let $d\ge 2.$  In order to denote the range of 
$\beta$ in the theorems, we set 
\[
\mathbf B_d= 
\begin{cases}     
 \, [1,2]       & \text{ if } d=2, 
 \\
[1,\infty]  &   \text{ if }  d=3,
\\
[2,\infty]  & \text{ if }  d\ge4. 
\end{cases} 
\]

Even though the estimate \eqref{e:max q-beta} involves two distinct parameters, $\beta$ and $q$,
the conjectured range for \eqref{e:max q-beta} follows once one establishes 
\begin{align}\label{e:max beta-beta}
\Big\|\sup_{t\in I}\sum_{j\geq1} \lambda_j|e^{it\sqrt{-\Delta}}f_j|^2 \Big\|_{L_x^\beta(B_1)}
\lesssim 
\|\lambda\|_{\ell^\beta}
\end{align}
for all $(f_j)_j \subset H^s(\R^d)$ with $s > s_d(2\beta)$. Indeed, when $q < 2\beta$, we have $L^\beta(B_1) \subset L^{q/2}(B_1)$ and $s_d(2\beta) = \max\{s_d(q), s_d(2\beta)\}$. On the other hand, when $q/2 \geq \beta$, we have $\ell^{q/2} \subset \ell^\beta$ and $s_d(q) = \max\{s_d(q), s_d(2\beta)\}$.

Therefore, proving \eqref{e:max beta-beta} for $\beta \in \mathbf B_d$ yields  the estimate \eqref{e:max q-beta} for all $q/2 \in \mathbf B_d$, $\beta \in \mathbf B_d$, and hence establishes Theorems \ref{t:max main3}, \ref{t:max main4}, and \ref{t:max main2}.

\subsection{Reductions}\label{s:3-1}
Recall that $\phi$ is supported on the interval $(2^{-1}, 2)$. Let $P_k$ be the standard Littlewood--Paley projection operator defined by  
\[ \widehat{P_kf}(\xi) =\phi_{2^k}(|\xi|)\widehat f(\xi).\]  

We may consider $e^{it\sqrt{-\Delta}}P_kf$ instead of $e^{it\sqrt{-\Delta}}f$.  Indeed,  by a standard  reduction (see, for example, \cite{KKS}), to prove \eqref{e:max beta-beta} for $(f_j)_j \subset H^s(\R^d)$,  it suffices to show 
\begin{align}\label{e:max q-beta2}
\Big\|\sum_j \lambda_j|e^{it\sqrt{-\Delta}}P_kf_j|^2 \Big\|_{L_x^\beta(B_1) L_t^\infty(I)}
\lesssim 
2^{2sk}\|\lambda\|_{\ell^\beta}, \quad k\ge 1 
\end{align}
for all $(f_j)_j \in L^2(\R^d)$ with $s>s_d(2\beta)$ provided $\beta \in \mathbf B_d$.

It is relatively easier to obtain the desired estimate \eqref{e:max q-beta2} 
when $\beta=1$ and $\beta=\infty$. Straightforward applications of the triangle and the Bessel inequalities yield 
\begin{equation}\label{e:Cinfty trivial}
\Big\|\sum_j \lambda_j|e^{it\sqrt{-\Delta}}P_kf_j|^2 \Big\|_{L_x^1(B_1) L_t^\infty(I)}
\lesssim
2^k  \|\lambda\|_{\ell^1}
\end{equation}
and 
\begin{equation}\label{e:C1 trivial}
\Big\|\sum_j \lambda_j|e^{it\sqrt{-\Delta}}P_kf_j|^2 \Big\|_{L_x^\infty(B_1) L_t^\infty(I)}
\lesssim
2^{dk}\|\lambda\|_{\ell^\infty}
\end{equation}
 respectively. 
See \cite{BHLNS, KKS} for details.

While those two estimates are sharp in all dimensions,  just interpolating them are certainly not enough to 
give the desired estimates \eqref{e:max q-beta2} for other $\beta$.
To obtain the estimate \eqref{e:max q-beta2}, 
we need to establish the following, which corresponds to the case $\beta=2$.
\begin{proposition}\label{prop:222}
Let $k\ge1$. Then
\begin{equation}\label{222}
\Big\|\sum_j \lambda_j|e^{it\sqrt{-\Delta}}P_kf_j|^2 \Big\|_{L_x^2(B_1) L_t^\infty(I)}
\lesssim
2^{2sk}\|\lambda\|_{\ell^2}
\end{equation}
holds for all $(f_j)_j \in L^2(\R^d)$  provided 
\begin{align}\label{SSSS}
s>\max \Big\{\frac d4, \frac 5{8} \Big\}.
\end{align}
\end{proposition}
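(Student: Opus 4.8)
\textbf{Proof proposal for Proposition \ref{prop:222}.}

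The plan is to pass to the dual (operator) formulation via the duality principle, so that proving \eqref{222} for $\beta = 2$ amounts to bounding $\|WT_kT_k^*\overline{W}\|_{\mathcal{C}^2}$ — that is, the Hilbert--Schmidt norm, which is just an $L^2$ norm of the kernel. Here $T_k f = e^{it\sqrt{-\Delta}}P_k f$ restricted to $B_1 \times I$, so the relevant exponents are $q = 2$, $r = \infty$ on the Strichartz side, with $\tilde q = \infty$ (hence the weight $W$ lives in $L_x^1 L_t^\infty$; by normalizing we may take $\|W\|_{L^\infty_t L^1_x}$-type control, or rather $\|W\|_{L_x^{\tilde q}L_t^{\tilde r}}$ with the half-conjugate exponents). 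The kernel of $T_kT_k^*$ is
\[
K_k((x,t),(x',t')) = \int e^{i(x-x')\cdot\xi + i(t-t')|\xi|}\phi_{2^k}(|\xi|)^2\,\d\xi,
\]
and after rescaling $\xi \mapsto 2^k\xi$ this becomes $2^{dk}$ times a Fourier integral of a function supported where $|\xi|\sim 1$, evaluated at $(2^k(x-x'), 2^k(t-t'))$. The Hilbert--Schmidt norm of the weighted operator is then
\[
\|WT_kT_k^*\overline W\|_{\mathcal{C}^2}^2 = \iiiint |W(x,t)|^2 |W(x',t')|^2 |K_k((x,t),(x',t'))|^2\,\d x\,\d t\,\d x'\,\d t',
\]
so everything reduces to understanding $|K_k|^2$, which is (essentially, modulo the asymptotics of $\widehat{d\sigma}$) comparable to $2^{2dk}(1 + 2^k||x-x'|-|t-t'||)^{-(d-1)}$ near the light cone, with rapid decay off it — i.e. precisely the shape of $2^{2dk}\mathcal{K}_\delta^N$ with $\delta = 2^{-k}$ after renormalizing the cross-section. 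This is where I would invoke Proposition \ref{p:bilinear}: writing $g_j = |W|^2$ (the same function in both slots after Cauchy--Schwarz, or handling the two slots by the bilinear form $\mathcal{B}_\delta$), the integral above is controlled by $2^{2dk}\cdot 2^{-k(d-1)\cdot(\text{something})}$ times $\|\,|W|^2\|_{L_x^2L_t^1}^2 = \|W\|_{L_x^4L_t^2}^4$.

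The key step is to bookkeep the powers of $2^k$ correctly against the $\theta_d$ gain from Proposition \ref{p:bilinear}. With $\delta = 2^{-k}$, that proposition gives a factor $\delta^{\theta_d} = 2^{-k\theta_d}$ with $\theta_d = 1$ for $d \ge 3$ and $\theta_d = 1/2$ for $d = 2$; combined with the $2^{dk}$ from the rescaling of the kernel (appearing squared, but one factor is absorbed into the measure renormalization defining $\mathcal{K}_\delta^N$, or more carefully tracked through the change of variables), one should land on
\[
\|WT_kT_k^*\overline W\|_{\mathcal{C}^2} \lessapprox 2^{2\sigma_d k}\|W\|_{L_x^4L_t^2}^2
\]
for the exponent $\sigma_d$ dual to $s_d(4) = \max\{d/4,\, (d+1)/4 - (d-1)/4\} = \max\{d/4, 1/2\}$ when $\beta = 2$ (so $2\beta = 4$); note $\max\{d/4,1/2\}$ equals $d/4$ for $d \ge 2$ except it equals $1/2$ when $d = 2$ — which matches \eqref{SSSS} since $\max\{d/4, 5/8\}$ is $5/8$ precisely at $d = 2$ and $d/4$ for $d \ge 3$ (with $d = 3$ giving $3/4$, the transitional value $\beta_\ast = 2$ alluded to in the text). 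Translating back through the duality principle yields \eqref{222} with any $s$ strictly exceeding this threshold, the $\epsilon$-loss in $\lessapprox$ being absorbed by the strict inequality.

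\textbf{Main obstacle.} I expect the genuinely delicate point to be the reduction of $|K_k|^2$, the squared kernel of $T_kT_k^*$, to the model kernel $\mathcal{K}_\delta^N$ to which Proposition \ref{p:bilinear} applies. The kernel $K_k$ is the inverse Fourier transform of a function supported on (a thickened piece of) the light cone; by the Bessel asymptotics used in Lemma \ref{l:dispersive2}, $K_k((x,t),(x',t'))$ has size $\sim 2^{dk}(1 + 2^k|\,|x-x'|-|t-t'|\,|)^{-(d-1)/2}$ with oscillation $e^{\pm i2^k(|x-x'|\mp(t-t'))}$ and Schwartz tails transverse to the cone — but taking the modulus squared kills the oscillation and doubles the polynomial decay to exponent $d-1$, and one must confirm that the resulting function is dominated, up to acceptable errors, by $2^{2dk}\mathcal{K}_{2^{-k}}^N$ for $N$ large (the residual cross-section factor $\tilde\phi(|x-x'|)$ is harmless once one localizes $x, x'$ to $B_1$, since then $|x - x'| \lesssim 1$, though one does need to also insert a harmless cutoff or use the compact support of $W$ to control the region $|x-x'| \ll 1$ where the cone degenerates at the origin — there the kernel is $O(2^{dk})$ uniformly, handled by the trivial bound). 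Once this domination is in place, Proposition \ref{p:bilinear} does the rest essentially mechanically.
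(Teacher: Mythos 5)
The high-level strategy is right — dualize to a Hilbert--Schmidt bound, compute the kernel $K_k$, compare with $\mathcal K_\delta^N$, and invoke Proposition~\ref{p:bilinear} — and matches the paper's. But there is a genuine gap in the kernel analysis that, as written, prevents the argument from landing on the sharp exponent.

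The issue is the claimed pointwise estimate $|K_k|^2 \approx 2^{2dk}\big(1+2^k\big||x-x'|-|t-t'|\big|\big)^{-(d-1)}$ "near the light cone," dominated by $2^{2dk}\mathcal K_{2^{-k}}^N$. This conflates two different decays. By the Bessel asymptotics in the proof of Lemma~\ref{l:dispersive2}, the correct size is
\[
|K_k(x,t)| \lesssim 2^{dk}\,(2^k|x|)^{-\frac{d-1}{2}}\,\big(1+2^k\big||x|-|t|\big|\big)^{-N},
\]
i.e.\ there is a genuine $|x|^{-(d-1)/2}$ growth as $|x|\to 0$ together with \emph{rapid} (not polynomial of order $d-1$) decay off the cone. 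On the slab $|x-x'|\sim 1$, $|K_k|^2$ is of size $2^{(d+1)k}$, not $2^{2dk}$; the scale $2^{2dk}$ is attained only when $|x-x'|\lesssim 2^{-k}$. In particular, $|K_k|^2$ is \emph{not} dominated by $2^{2dk}\mathcal K_{2^{-k}}^N$: the latter vanishes for $|x-x'|\not\sim1$, so on each intermediate scale $2^{-k}\lesssim|x-x'|\lesssim 1$ the left side is nonzero while the right side is zero. Relegating that whole intermediate regime to "the trivial bound" throws away too much: a trivial bound on $|x-x'|\lesssim1$ gives $\|WT_k\overline W\|_{\C^2}^2 \lesssim 2^{(2d-1)k}\|W\|_{L_x^4L_t^2}^4$, hence $s\ge \frac d2-\frac14$, far from $\frac d4$.

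What the paper actually does — and what your argument is missing — is a dyadic decomposition of the kernel in $|x-x'|$: one writes $K_k=\sum_{0\le l\le k} K_{k,l}$ with $K_{k,l}$ supported on $|x-x'|\sim 2^{-l}$, and on each piece records the size $|K_{k,l}|\lesssim 2^{\frac{d+1}2k}2^{\frac{d-1}2l}\widetilde{\mathcal K}_{k,l}$. After rescaling $(x,t)\mapsto 2^{-l}(x,t)$ the cross-section becomes $\sim1$ and the thickness becomes $\delta=2^{l-k}$, at which point Proposition~\ref{p:bilinear} applies and gives a factor $\delta^{\theta_d}$. Only after summing in $l$ (which converges precisely because of the gain in Proposition~\ref{p:bilinear}, the trivial bound on $l=k$ covers the degenerate ball of radius $2^{-k}$) does one obtain $\|WT_k\overline W\|_{\C^2}\lessapprox 2^{dk/2}\|W\|_{L_x^4L_t^2}^2$ for $d\ge 3$ (and $2^{(5/4)k}$ for $d=2$), which is the sharp bound. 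Without the $l$-decomposition the power of $2^k$ simply does not come out right; the vagueness in your bookkeeping ("one factor is absorbed into the measure renormalization, or more carefully tracked through the change of variables") is covering over exactly the place where this decomposition must occur. As a minor point, your evaluation of $s_d(4)$ contains an arithmetic slip: the second branch is $\frac{d+1}4-\frac{d-1}{8}=\frac{d+3}8$, not $\frac12$, which is why the $d=2$ threshold is $\frac58$ rather than $\frac12$.
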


Assuming Proposition \ref{prop:222}, the proof of Theorems \ref{t:max main3}, \ref{t:max main4}, and \ref{t:max main2} follows, as they are straightforward consequences of interpolation. 

\begin{proof}[Proof of Theorems \ref{t:max main3}, \ref{t:max main4}, and \ref{t:max main2}] 
From the discussion above, we have only to show \eqref{e:max q-beta2} for $s>s_d(2\beta)$ when $\beta \in \mathbf B_d$.

We consider the case $d\ge3$ first. Interpolation  between the estimates \eqref{222}, \eqref{e:Cinfty trivial}, and \eqref{e:C1 trivial} gives \eqref{e:max q-beta2} for
\[
s>  \tilde s_{d}(\beta) :=\begin{cases}
	 \  \   \frac d2-\frac d{2\beta},  &\text{ if } \beta \in [2,\infty],\\[1ex]
\frac{d-1}2-\frac{d-2}{2\beta},  & \text{ if }  \beta \in [1,2].
\end{cases}
\]
Recalling from \eqref{sd_sigma} the definition of $s_d(2\beta)$, one can easily see $s_d(2\beta)=\tilde s_{d}(\beta)$ for $\beta\in \mathbf B_d$.  This proves Theorems \ref{t:max main3} and \ref{t:max main4}.

When $d=2$,  similarly by interpolation,  we have \eqref{e:max q-beta2} for
\[
s>\tilde s_2(\beta):=\begin{cases}
1-\frac{3}{4\beta},   &\text{ if }  \beta \in [2,\infty],
\\[1ex]
\frac{3}4-\frac{1}{4\beta}, &\text{ if }  \beta \in [1,2].
\end{cases}
\]
Note that $s_d(2\beta)=\tilde s_2(\beta)$ when $\beta \in \mathbf B_2$. Consequently, Theorem \ref{t:max main2} follows. 
\end{proof}

For the remainder of the section, we prove Proposition \ref{prop:222}.

\subsection{Proof of Proposition \ref{prop:222}} 
 In order to prove the estimate  \eqref{222}, we first recall the well-known strategy (see \cite{FS_AJM, BHLNS}), which makes use of 
 the Schatten spaces. 

Let 
\[ T_k=\chi_{B_1}e^{it\sqrt{-\Delta}}P_k (e^{it\sqrt{-\Delta}}P_k )^*\chi_{B_1}.\]
A duality principle tells that the estimate  \eqref{222} is equivalent to 
\begin{align}
\|WT_{k} \overline W\|_{\mathcal C^2}
&\lesssim 2^{2sk}
\|W\|_{L_x^4L_t^2}^2 \label{c22}
\end{align}
with $s$ satisfying \eqref{SSSS}.

To handle  the operator $WT_k\overline{W}$, we further decompose $T_k$ in the spatial variable. 
Denoting by $K_k$ the kernel of $T_k$, we have 
$$
T_k F(x,t) = \int \chi_{B_1} (x) K_k(x - x', t-t') \chi_{B_1}(y) F(x',t')\, \d x'\d t'.
$$
One can easily see that  $K_k$ is  given by
\begin{equation}
\label{ker-k}
K_k(x,t) = \int e^{i x \cdot \xi + i t |\xi|} \phi_{2^{k}}^2(|\xi|)\, d\xi.
\end{equation}

Let $0\le l\le k$. We set 
\[
K_{k,l}(x,t)= 
\begin{cases}
 \phi_{2^{2-l}}(|x|) K_k(x,t), &\text{ if }   0\le l<k, 
 \\[1ex]
 \phi_{2^{2-k}}^\circ(|x|) K_k(x,t), &\text{ if }   l=k. 
 \end{cases}
\] 
Consequently, we have $K_k = \sum_{0\le l \le k} K_{k,l}$ 
for $x\in B_2$. Thus, 
we have  
\[ T_k= \sum_{0 \le l \le k}T_{k,l},\] 
where
\[
T_{k,l}F(x,t) = \int  \chi_{B_1} (x) K_{k,l}(x-x',t-t') \chi_{B_1} (x')F(x',t')\,\d x'\d t'.
\]
Thus, it suffices to prove that
\begin{align}\label{final00}
    \|WT_{k,l}\overline{W}\|_{\C^2}^2
    \lessapprox
    \begin{cases}
	    2^{dk}\|W\|_{L_x^4L_t^2}^4,  &\text{ if } d\ge3,
	    \\[1ex]
    	    2^{\frac 52k}2^{-\frac l2}\|W\|_{L_x^4L_t^2}^4,  &\text{ if }  d=2.
    \end{cases}
\end{align}
Summing over all $1 \le l \le k$ gives \eqref{c22} for $s$ satisfying \eqref{SSSS} as desired.

\begin{proof}[Schatten $2$ estimate $($proof of \eqref{final00}$)$]
Recall that $\phi\in C^\infty_c((2^{-1}, 2))$. 
Since $WT_{k,l}\overline W$ is a Hilbert--Schmidt operator, the Schatten $\mathcal C^2$-norm can be expressed in terms of $L^2$ norm of the kernel of the operator $WT_{k,l}\overline W$. Thus, it follows that 
\begin{equation}
\label{e:c2}
    \|WT_{k,l}\overline{W}\|_{\C^2}^2
    =
    \iint \iint h(x,t) h(x',t') |K_{k,l}(x-x',t-t')|^2\,\d x \d t \d x' \d t', 
\end{equation}
where we write, for simplicity, 
\[h(x,t)=|W(x,t)|^2.\]

We treat the cases $l=k$ and $0\le l<k$ separately. When $l=k$, note that $\|K_{k,k}\|_\infty\lesssim 2^{dk}.$  Thus, we have 
\[|K_{k,k}(x,t) |^2 \lesssim 2^{2dk}\chi_{B_{2^{3-k}}}(x,t).\]
Here, we recall that $B_r\subset \mathbb R^{d+1}$ denotes the ball of radius $r$ centered at the origin. Therefore, by the Young's convolution inequality, we obtain
\[
\|h (h \ast 2^{2dk} \chi_{B_{2^{3-k}}})\|_{L_{x,t}^1}
\lesssim  2^{2dk} \|h\|_{L_x^2L_t^1} \|h\ast \chi_{B_{2^{3-k}}}\|_{L_x^2L_t^\infty}
\lesssim 2^{dk}\|h\|_{L_x^2L_t^1}^2.
\]
Since $\|h\|_{L_x^2L_t^1}=\|W\|_{L_x^4L_t^2}^2$, this yields the desired bounds in \eqref{final00} for $l=k$.

We now consider the cases  $0\le l<k$.   
Recalling \eqref{ker-k}, we note that $K_{k,l}=\phi_{2^{2-l}}(|x|) 2^{dk}(\phi^2 d\gamma)^\wedge(2^kx,2^kt)$, where $\gamma$ denotes the conic measure defined earlier. Thus,  \eqref{decaying} yields the  estimate
\[
\big| K_{k,l}(x,t) \big| \lesssim 2^{\frac{d+1}2 k} 2^{\frac{d-1}{2} l}  \widetilde{\mathcal K}_{k,l} (x,t), 
\]  
where $\widetilde{\mathcal K}_{k,l} (x,t)=(1+2^{2k}||x|-|t||^2)^{-N}\phi_{2^{2-l}}(|x|)$.
Thus,  $\|WT_{k,l}\overline{W}\|_{\C^2}^2$ is bounded above by, up to a constant,
\begin{align}\label{cfcf}
2^{(d+1)k} 2^{-(d-1) l}   \iiiint h(x,t) h(x',t')   \widetilde{\mathcal K}_{k,l}(x-x',t-t')\,\d x \d t \d x' \d t'.
\end{align}

By recalling 
\eqref{BBB}  and scaling  $(x,t, x',t')\to 2^{-l}(x,t, x',t')$, we have 
\begin{align}\label{hkh-bhh}
    \|WT_{k,l}\overline{W}\|_{\C^2}^2
    \lesssim 
   2^{(d+1)k} 2^{(d-1) l}    
   \big| \mathcal B_{2^{l-k}}(h_{l},h_{l})\big|,
\end{align}
 where  $h_l=2^{-(d+1)l}h(2^{-l}\cdot)$.   Applying Proposition~\ref{p:bilinear}  with $\delta=2^{l-k}$, 
 we have 
 \begin{align*} 
 \big| \mathcal B_{2^{l-k}}(h_{l},h_{l})\big|
 &\lessapprox 2^{\theta_d(l-k)}\|h_{l}\|_{L_x^2L_t^1}^2 
 \\ 
 &\lessapprox
 \begin{cases}
 2^{l-k} 2^{-dl}
 \|h\|_{L_x^2L_t^1}^2, \quad &\text{if} \quad d\ge3,\\[1ex]
 2^{\frac12(l-k)}2^{-2l}
  \|h\|_{L_x^2L_t^1}^2, \quad &\text{if} \quad d=2.
 \end{cases}
 \end{align*}
For the last inequality, we use rescaling. 
Combining this with \eqref{hkh-bhh}, we obtain
\[
    \|WT_{k,l}\overline{W}\|_{\C^2}^2
\lessapprox
\begin{cases}
2^{dk} \|h\|_{L_x^2L_t^1}^2,  &\text{ if }  d\ge3,\\[1ex]
2^{\frac 52k}2^{-\frac l2} \|h\|_{L_x^2L_t^1}^2,  &\text{ if }  d=2,
\end{cases}
\]
which is \eqref{final00} as desired.
\end{proof}

\section*{Acknowledgements} 
This work was supported by G-LAMP RS-2024-00443714, RS-2024-00339824, JBNU research funds for newly appointed professors in 2024, and NRF2022R1I1A1\-A01055527 (H. Ko);  the NRF (Republic of Korea) grant  RS-2024-00342160 (S. Lee);  FCT/Portugal through project UIDB/04459/2020 (10-54499/UIDP/04459/2020) and the Croatian Science Foundation (HRZZ-IP-2022-10-5116 (FANAP)) (S. Shiraki).

\end{document}